\theoremstyle{plain}
\newtheorem{thm}{Theorem}
  \theoremstyle{definition}
  \newtheorem*{thm*}{Theorem}
  \theoremstyle{remark}
  \newtheorem{rem}[thm]{Remark}
  \theoremstyle{plain}
  \newtheorem{prop}[thm]{Proposition}
  \theoremstyle{plain}
  \newtheorem{lem}[thm]{Lemma}
  \theoremstyle{plain}
  \newtheorem{cor}[thm]{Corollary}
 \theoremstyle{definition}
  \theoremstyle{remark}
  \newtheorem*{rem*}{Remark}
  \theoremstyle{definition}
\newtheorem*{question*}{\it{QUESTION}}
\theoremstyle{plain}
\newcommand{\N}{\mathbb{N}}
\newcommand{\R}{{\mathbb{R}}}
\newcommand{\C}{{\mathbb{C}}}
\newcommand{\Z}{{\mathbb{Z}}}
\newcommand{\dd}{{\rm d}}
\newcommand{\ii}{{\rm i}}
\newcommand{\sign}{\mathop\mathrm{sign}\nolimits}
\renewcommand{\Re}{\mathop\mathrm{Re}\nolimits}
\renewcommand{\Im}{\mathop\mathrm{Im}\nolimits}
\def\pFq#1#2#3#4#5{ % macro for hypergeometric functions
  {}_{#1}F_{#2}\biggl(\genfrac..{0pt}{}{#3}{#4}\biggl|\,#5\biggr)
}
\begin{document}

\title[]{Asymptotic spectral properties of the Hilbert $L$-matrix}

\dedicatory{Dedicated to the memory of Harold Widom.}

\author{Franti{\v s}ek {\v S}tampach}
\address[Franti{\v s}ek {\v S}tampach]{
	Department of Mathematics, Faculty of Nuclear Sciences and Physical Engineering, Czech Technical University in Prague, Trojanova~13, 12000 Praha~2, Czech Republic
	}	
\email{stampfra@fjfi.cvut.cz}

\subjclass[2020]{15B99, 15A18, 47B36, 47B35}

\keywords{Hilbert $L$-matrix, $L$-matrices, asymptotic spectral analysis}

\date{\today}

\begin{abstract}
We study asymptotic spectral properties of the generalized Hilbert $L$-matrix
\[
 L_{n}(\nu)=\left(\frac{1}{\max(i,j)+\nu}\right)_{i,j=0}^{n-1},
\]
for large order $n$. First, for general $\nu\neq0,-1,-2,\dots$, we deduce the asymptotic distribution of eigenvalues of $L_{n}(\nu)$ outside the origin. Second, for $\nu>0$, asymptotic formulas for small eigenvalues of $L_{n}(\nu)$ are derived. Third, in the classical case $\nu=1$, we also prove asymptotic formulas for large eigenvalues of $L_{n}\equiv L_{n}(1)$. In particular, we obtain an asymptotic expansion of $\|L_{n}\|$ improving Wilf's formula for the best constant in truncated Hardy's inequality.
\end{abstract}

\maketitle

\section{Introduction}

The (generalized) Hilbert $L$-matrix
\[
L(\nu):=\begingroup % keep the change local
 \setlength\arraycolsep{2pt}\def\arraystretch{1.2} % extended spacing between columns and rows
 \begin{pmatrix}
 \frac{1}{\nu} & \frac{1}{\nu+1} & \frac{1}{\nu+2} & \dots \\
 \frac{1}{\nu+1} & \frac{1}{\nu+1} & \frac{1}{\nu+2} & \dots \\
 \frac{1}{\nu+2} & \frac{1}{\nu+2} & \frac{1}{\nu+2} & \dots \\
 \vdots & \vdots & \vdots & \ddots 
 \end{pmatrix},
 \endgroup
\]
raised a recent interest for its peculiar properties and similarity to the famous (generalized) Hilbert matrix 
\[
H(\nu):=\begingroup % keep the change local
 \setlength\arraycolsep{2pt}\def\arraystretch{1.2} % extended spacing between columns and rows
 \begin{pmatrix}
 \frac{1}{\nu} & \frac{1}{\nu+1} & \frac{1}{\nu+2} & \dots \\
 \frac{1}{\nu+1} & \frac{1}{\nu+2} & \frac{1}{\nu+3} & \dots \\
 \frac{1}{\nu+2} & \frac{1}{\nu+3} & \frac{1}{\nu+4} & \dots \\
 \vdots & \vdots & \vdots & \ddots 
 \end{pmatrix},
 \endgroup
\]
see~\cite{bou-mas_oam21a,bou-mas_laa22,sta_jfa22}.
For $n\in\N$, we denote by $L_{n}(\nu)$ and $H_{n}(\nu)$ the $n\times n$ sections of $L(\nu)$ and $H(\nu)$, i.e., 
\[
 \left(L_{n}(\nu)\right)_{i,j}=\frac{1}{\max(i,j)+\nu} \quad\mbox{ and }\quad \left(H_{n}(\nu)\right)_{i,j}=\frac{1}{i+j+\nu},
\]
for $i,j=0,1,\dots,n-1$. Further, if $\nu=1$, we write simply $L_{n}\equiv L_{n}(1)$ and $H_{n}\equiv H_{n}(1)$. The Hilbert $L$-matrix appears less frequently than its Hankel counterpart but $L_{n}$ is used, for example, in Choi's tricks or treats~\cite{choi_amm83} and referred to as \emph{the loyal companion} of the classical Hilbert matrix $H_{n}$. In fact, $L_{n}$ appears even earlier in work of Wilf~\cite{wil_70}, who recognized in $\|L_{n}\|$ the optimal constant in the truncated classical Hardy inequality; see~\eqref{eq:disc_hardy_ineq} below.

While generalized Hilbert matrix $H(\nu)$ is the prominent example of a Hankel matrix its loyal companion $L(\nu)$ is a less known example of the class of matrices whose entries are arranged in the reversed $L$-shape and therefore briefly called $L$-matrices~\cite{bou-mas_oam21a,bou-mas_oam21b}. The norm of semi-infinite $L$-matrices, regarded as operators on $\ell^{2}(\N_{0})$, was investigated by Bouthat and Masherghi in~\cite{bou-mas_oam21a}. In particular, the authors observed that $\|L(\nu)\|=4$ for all $\nu\geq1/2$. In fact, $\|L(\nu)\|=4$ for all $\nu\geq\nu_{0}\approx0.3491$, where the threshold value $\nu_{0}$ can be identified as a~unique positive zero of an ${}_{3}F_{2}$-hypergeometric function, see~\cite[Thm.~29]{sta_jfa22} for more details. Explicit lower and upper bounds on $\nu_{0}$ were obtained only recently in~\cite{bou-mas_laa22}. Concerning spectral properties of $L(\nu)$, the spectrum of $L(\nu)$ is purely absolutely continuous and fills the interval $[0,4]$ for $\nu\geq\nu_{0}$. If $0<\nu<\nu_{0}$, a unique eigenvalue greater than $4$ appears in the spectrum of $L(\nu)$. For more information on spectral properties of $L(\nu)$, we refer the reader  to~\cite{sta_jfa22}.

This paper aims to establish asymptotic spectral properties of matrix $L_{n}(\nu)$ for $n\to\infty$. As a~motivation and also for comparison reasons, we first recall known results for the Hilbert matrix. We denote by $\lambda_{1,n}(\nu)\leq\lambda_{2,n}(\nu)\leq\dots\leq\lambda_{n,n}(\nu)$ eigenvalues of $H_{n}(\nu)$. 

From the numerical point of view, $H_{n}(\nu)$ is a canonical example of ill-conditioned Hankel matrix~\cite{todd_nbsams54,todd_jrnbs61}. For $\nu>0$, matrix $H_{n}(\nu)$ is positive-definite for all $n\in\N$ and, if $\nu\geq1/2$, the spectrum of $H(\nu)$ is purely absolutely continuous filling the interval $[0,\pi]$, see~\cite{ros_pams58} or~\cite[Thm.~8]{kal-sto_lma16}. It follows that $0<\lambda_{1,n}(\nu)\leq\dots\leq\lambda_{n,n}(\nu)\leq\pi$, for $\nu\geq1/2$, and the eigenvalues fill densely the interval $[0,\pi]$, as $n\to\infty$. However, they are distributed very irregularly. For $n$ large, most eigenvalues are located in a right neighborhood of $0$ while they appear only sporadically in a left neighborhood of $\pi$. In fact, Widom showed that the number of the eigenvalues located outside a neighborhood of $0$ is proportional to $\log n$ for $n$ large. More precisely, in~\cite{wid_tams66}, Widom proved the asymptotic formula, which was previously conjectured by Wilf in~\cite{wil_jmaa64}, and which implies that, for any $x\in(0,1)$ and $\nu\in\R\setminus(-\N_{0})$, we have
\begin{equation}
\lim_{n\to\infty}\frac{\#\{\lambda\in\sigma(H_{n}(\nu)) \mid \pi x<\lambda<\pi \}}{\log n}=\frac{1}{\pi^{2}}\log\left(\frac{1+\sqrt{1-x^{2}}}{x}\right),
\label{eq:hilbert_number_evls_limit}
\end{equation}
where $\# S$ is the cardinality of the set $S$ (there is a missing factor $1/(2\pi)$ in~\cite{wid_tams66}).

A closely related problem of interest aims to deduce an asymptotic behavior of the extreme eigenvalues $\lambda_{1,n}(\nu)$ and $\lambda_{n,n}(\nu)$, for $n\to\infty$. In~\cite{wid-wil_pams66}, Widom and Wilf showed that the smallest eigenvalue of a Hankel matrix, whose entries are moments of a compactly supported measure from the Szeg\H{o} class, exhibits an exponential decay; see also~\cite{ber-szw_ca11} for more recent developments. As an application, the authors give an asymptotic formula for $\lambda_{1,n}(1)$ in the end of their paper (the formula contains a typo which was corrected in~\cite{wid-wil_pams68}). In a greater generality, for $\nu>0$, we have
\begin{equation}
 \lambda_{1,n}(\nu)=\frac{2^{15/4}\pi^{3/2}}{(1+\sqrt{2})^{2\nu-2}}\frac{\sqrt{n}}{(1+\sqrt{2})^{4n}}\left(1+o(1)\right), \quad n\to\infty,
\label{eq:hilbert_smallest_evl_asympt}
\end{equation}
see~\cite{kal_faa01}. As far as the largest eigenvalue $\lambda_{n,n}(\nu)$, which coincides with $\|H_{n}(\nu)\|$ for $\nu>0$, is concerned, we have to mention the spectacular asymptotic formula for $\lambda_{n,n}(2)$ due to de~Bruijn and Wilf~\cite{deb-wil_bams62}:
\begin{equation}
 \lambda_{n,n}(2)=\pi-\frac{\pi^{5}}{2\log^{2} n}+O\!\left(\frac{\log\log n}{\log^{3}n}\right), \quad n\to\infty.
\label{eq:hilbert_largest_evl_asympt} 
\end{equation}
Number $\lambda_{n,n}(2)$ is the optimal constant in truncated Hilbert's inequality
\[
 \sum_{i,j=1}^{n}\frac{x_{i}x_{j}}{i+j}\leq\lambda_{n,n}(2)\sum_{i=1}^{n}x_{i}^{2},
\]
which holds true for all $x_{1},\dots,x_{n}\in\R$. Let us also remark that perhaps the best up to date known upper bound on $\|H_{n}\|$ was obtained by Otte in~\cite{ott_pjm05}.

To prove~\eqref{eq:hilbert_largest_evl_asympt}, de~Bruijn and Wilf relate matrix $H_{n}(2)$ to an integral operator and elaborate on ideas of Widom~\cite{wid_tams58,wid-tams61}, see~\cite[Sec.~2.5]{wil_70}. In fact, Wilf realized that the method admits a generalization and obtained the following nice result, see~\cite{wil_im62} or~\cite[Sec.~2.6]{wil_70}.

\begin{thm*}[Wilf]
\emph{
 Let $\mathcal{K}:(0,\infty)\times(0,\infty)\to\R$ be symmetric, homogeneous of degree $-1$, and decreasing kernel such that
 \[
  \mathcal{K}(x,1)=O\left(x^{-1/2-\delta}\right), \quad x\to\infty,
 \]
 for some $\delta>0$. Then the norm of matrix $K_{n}:=\left(\mathcal{K}(i,j)\right)_{i,j=1}^{n}$ satisfies
 \[
  \|K_{n}\|=A-\frac{B\pi^{2}}{\log^{2}n}+O\left(\frac{\log\log n}{\log^{3} n}\right), \quad n\to\infty,
 \]
 where
 \[
  A=\int_{0}^{\infty}\frac{\mathcal{K}(x,1)}{\sqrt{x}}\,\dd x \quad\mbox{ and }\quad 
  B=\int_{1}^{\infty}\frac{\log^{2}x}{\sqrt{x}}\,\mathcal{K}(x,1)\,\dd x.
 \]
 }
\end{thm*}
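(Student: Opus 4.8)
The plan is to reduce the finite eigenvalue problem to the spectral analysis of a truncated convolution operator of length $\log n$, following the circle of ideas of Widom and de~Bruijn--Wilf. Since $\mathcal{K}$ is positive (being decreasing with $\mathcal{K}(x,1)\to0$) and symmetric, $K_{n}$ is an entrywise positive symmetric matrix, so by Perron--Frobenius its norm equals its largest eigenvalue and is given by the variational formula $\|K_{n}\|=\sup\{\langle K_{n}x,x\rangle : \|x\|=1,\ x\geq0\}$. First I would compare this quadratic form with that of the integral operator $T$ with kernel $\mathcal{K}(x,y)$ acting on $L^{2}$ of a finite interval. Representing $x=(x_{1},\dots,x_{n})$ by a step function and using that $\mathcal{K}$ is decreasing in each variable, one sandwiches $\langle K_{n}x,x\rangle$ between the quadratic forms of $T$ on $L^{2}(1,n+1)$ and on $L^{2}(0,n)$; the near-origin piece of the latter is harmless because $\mathcal{K}(x,1)x^{-1/2}$ is integrable at $0$. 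After the substitution $u=\log x$ both intervals have $u$-length $\log n+O(1)$, and since replacing the length $\ell$ by $\ell+O(1)$ perturbs the sought correction $B\pi^{2}/\ell^{2}$ only at order $\ell^{-3}$, we may fix $\ell:=\log n$.

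The decisive step is the unitary change of variables $(Uf)(u)=e^{u/2}f(e^{u})$, a unitary map from $L^{2}$ of the relevant $x$-interval onto $L^{2}(0,\ell)$. Homogeneity of degree $-1$ turns the kernel into a difference kernel, since $e^{(u+v)/2}\mathcal{K}(e^{u},e^{v})=k(u-v)$ with $k(w):=e^{w/2}\mathcal{K}(e^{w},1)$. Thus $T$ becomes the truncated convolution operator $\mathcal{C}_{\ell}$, $(\mathcal{C}_{\ell}g)(u)=\int_{0}^{\ell}k(u-v)g(v)\,\dd v$; the growth hypothesis $\mathcal{K}(x,1)=O(x^{-1/2-\delta})$ makes $k\in L^{1}(\R)\cap L^{2}(\R)$, so $\mathcal{C}_{\ell}$ is Hilbert--Schmidt and hence has discrete spectrum. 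Its symbol is
\[
 \hat{k}(\tau)=\int_{-\infty}^{\infty}e^{-\ii\tau w}k(w)\,\dd w=\int_{0}^{\infty}x^{-1/2-\ii\tau}\mathcal{K}(x,1)\,\dd x.
\]
Symmetry and homogeneity make $k$ even, so $\hat{k}$ is real and even; since $k\geq0$, the symbol attains its maximum at $\tau=0$, where $\hat{k}(0)=A$. Differentiating twice gives $\hat{k}''(0)=-\int_{0}^{\infty}(\log x)^{2}x^{-1/2}\mathcal{K}(x,1)\,\dd x$, and splitting the integral at $x=1$ and applying $x\mapsto1/x$ together with the relation $\mathcal{K}(1/x,1)=x\,\mathcal{K}(x,1)$ shows the two halves coincide and sum to $2B$. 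Hence
\[
 \hat{k}(\tau)=A-B\tau^{2}+O(\tau^{4}),\qquad \tau\to0.
\]

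It remains to extract the asymptotics of $\|\mathcal{C}_{\ell}\|$ as $\ell\to\infty$, and this is the main obstacle. Heuristically, near the top of the spectrum $A-\hat{k}(\tau)\approx B\tau^{2}$, so $A-\mathcal{C}_{\ell}$ behaves like $-B\,\dd^{2}/\dd u^{2}$ with effective Dirichlet conditions at the two endpoints; the lowest admissible frequency is $\tau\approx\pi/\ell$, producing the leading correction $A-B\pi^{2}/\ell^{2}=A-B\pi^{2}/\log^{2}n$. To make this rigorous I would follow Widom's method: for the lower bound, insert a trial function of the form $\sin(\pi u/\ell)$, smoothly cut off near the endpoints, to obtain $\|\mathcal{C}_{\ell}\|\geq A-B\pi^{2}/\ell^{2}-O(\ell^{-3}\log\ell)$; for the matching upper bound, compare $\mathcal{C}_{\ell}$ with the full-line convolution and control the boundary layers in which the genuine top eigenfunction departs from a pure sine. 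The width of these boundary layers, governed by the tail of $k$ and fixed by balancing it against the polynomial decay of the correction, is precisely what forces the error $O(\log\log n/\log^{3}n)=O(\ell^{-3}\log\ell)$ rather than $O(\log^{-3}n)$. Collecting the defects from the sandwiching step, which are of smaller order, then yields the claimed expansion.
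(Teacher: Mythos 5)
The paper does not actually prove this theorem: it quotes it as a known result of Wilf (\cite{wil_im62}, \cite[Sec.~2.6]{wil_70}), whose original argument is exactly the route you take --- relate the matrix to an integral operator with homogeneous kernel and reduce, via the exponential substitution, to the spectral asymptotics of a truncated convolution operator in the style of Widom and de~Bruijn--Wilf. Your reduction steps are sound and correctly executed: the monotonicity sandwich of the quadratic form between the integral operators on $L^{2}(1,n+1)$ and $L^{2}(0,n)$ (the origin is indeed harmless, since homogeneity and symmetry give $\mathcal{K}(x,1)=O(x^{-1/2+\delta})$ as $x\to0+$); the unitary $(Uf)(u)=e^{u/2}f(e^{u})$ producing the even difference kernel $k(w)=e^{w/2}\mathcal{K}(e^{w},1)=O(e^{-\delta|w|})$; and the symbol expansion $\hat{k}(\tau)=A-B\tau^{2}+O(\tau^{4})$, where the substitution $x\mapsto 1/x$ correctly identifies $-\hat{k}''(0)$ with $2B$.

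The gap is at the final step, where essentially all of the analytic difficulty of the theorem is concentrated: the assertion $\|\mathcal{C}_{\ell}\|=A-B\pi^{2}/\ell^{2}+O(\ell^{-3}\log\ell)$ as $\ell\to\infty$. This is precisely Widom's theorem on extreme eigenvalues of translation kernels \cite{wid-tams61}, and your treatment of it consists of a heuristic (the operator ``behaves like'' $-B\,\dd^{2}/\dd u^{2}$ with Dirichlet conditions) followed by the statement that you \emph{would} follow Widom's method. The lower bound by a cut-off sine trial function is routine, but the matching upper bound --- showing that no eigenvalue exceeds $A-B\pi^{2}/\ell^{2}$ by more than $O(\ell^{-3}\log\ell)$, i.e.\ the quantitative control of the boundary layers and of the spectrum away from the frequency $\tau=0$ --- is the substance of Widom's paper and is not carried out; nothing in your sketch accounts for where the factor $\log\ell$ in the error actually arises. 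If you are allowed to invoke Widom's theorem as a known result, your argument becomes a correct and faithful reconstruction of Wilf's proof (modulo one small point in the discretization: for the lower bound you need the supremum of $\langle Tf,f\rangle$ over nonnegative step functions to exhaust $\|T\|$, which holds because the top eigenfunction of the positivity-preserving operator $T$ is nonnegative); without it, the proof is incomplete at its central point. Note, incidentally, that the present paper deliberately avoids this machinery: its Theorem~\ref{thm:l-hilbert_large_evls} is obtained by direct asymptotic analysis of the characteristic polynomial rather than by integral-operator approximation.
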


Since the kernel $\mathcal{K}(x,y)=1/\max(x,y)$ fulfills the assumptions (in particular is homogeneous of degree $-1$) a straightforward application of Wilf's theorem yields an asymptotic formula for $\|L_{n}\|$ in the form~\cite[Eq.~2.67]{wil_70}
\begin{equation}
 \|L_{n}\|= 4-\frac{16\pi^{2}}{\log^{2}n}+O\left(\frac{\log\log n}{\log^{3} n}\right), \quad n\to\infty.
\label{eq:norm_L_n_wilf}
\end{equation}
Number $\|L_{n}\|$ is of particular interest since it coincides with the optimal constant in truncated Hardy's inequality
\begin{equation}
 \sum_{k=1}^{n}\left(\frac{x_{1}+\dots +x_{k}}{k}\right)^{\!2}\leq\|L_{n}\|\sum_{k=1}^{n}x_{k}^{2},
\label{eq:disc_hardy_ineq}
\end{equation}
where $x_{1},\dots,x_{n}\in\R$. Indeed, the matrix from the quadratic form on the left-hand side in~\eqref{eq:disc_hardy_ineq}, say $M_{n}$, whose entries read
\[
 \left(M_{n}\right)_{i,j}=\sum_{l=\max(i,j)}^{n}\frac{1}{l^{2}},
\]
for $i,j\in\{1,\dots,n\}$, is similar to $L_{n}$ via the similarity transform $L_{n}=C_{n}M_{n}C_{n}^{-1}$, where
\[
\left(C_{n}\right)_{i,j}=
\begin{cases}
 \frac{1}{i},& \quad \mbox{ if } 1\leq j\leq i\leq n,\\
 0,& \quad \mbox{ otherwise, }
\end{cases}
\]
is the Ces\`{a}ro matrix. Thus, $M_{n}$ and $L_{n}$ have the same eigenvalues. Of course, \eqref{eq:hilbert_largest_evl_asympt} can be reproduced by a similar application of Wilf's theorem to the kernel $\mathcal{K}(x,y)=1/(x+y)$ (the homogeneity of $\mathcal{K}$ of order $-1$ is the reason for $\nu=2$ in~\eqref{eq:hilbert_largest_evl_asympt}).

The goal of this article is to deduce a similar result to~\eqref{eq:hilbert_number_evls_limit} in the case of matrix $L_{n}(\nu)$ for $\nu\in\R\setminus(-\N_{0})$, analyze asymptotic behavior of small eigenvalues of $L_{n}(\nu)$ for $\nu>0$, and also large eigenvalues but only for $\nu=1$. In particular, we improve Wilf's asymptotic formula~\eqref{eq:norm_L_n_wilf}. Let us denote by $\mu_{1,n}(\nu)\leq\mu_{2,n}(\nu)\leq\dots\leq\mu_{n,n}(\nu)$ eigenvalues of $L_{n}(\nu)$ (in fact, the eigenvalues are all distinct as shown in Theorem~\ref{thm:spec_L_n} below). If $\nu=1$, we write $\mu_{j,n}\equiv\mu_{j,n}(1)$ for all $j\in\{1,\dots,n\}$. Our main results are summarized in the following 3 theorems.

\begin{thm}\label{thm:l-hilbert_limit}
 For $\nu\in\R\setminus(-\N_{0})$ and $x\in(0,1)$, we have 
 \[
  \lim_{n\to\infty}\frac{\#\{\mu\in\sigma(L_{n}(\nu)) \mid 4x<\mu<4\}}{\log n}=\frac{1}{2\pi}\sqrt{\frac{1-x}{x}}.
 \]
\end{thm}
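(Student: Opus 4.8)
The plan is to pass to the inverse matrix, which turns out to be an explicit Jacobi (tridiagonal) matrix, and then to count eigenvalues near a band edge by oscillation theory. First I would observe that $L_n(\nu)$ is a rank-one semiseparable (discrete Green's) matrix, since its entries factor as $\left(L_n(\nu)\right)_{ij}=u_{\min(i,j)}\,v_{\max(i,j)}$ with $u_k\equiv1$ and $v_k=1/(k+\nu)$. Consequently $L_n(\nu)$ is invertible and $J_n:=L_n(\nu)^{-1}$ is tridiagonal; a direct computation identifies $J_n$ as the Jacobi matrix with off-diagonal entries $-(k+\nu)(k+1+\nu)$ and diagonal entries $2(k+\nu)^2$, the last diagonal entry being modified to $(n-1+\nu)^2$ by the truncation. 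Since $L_n(\nu)$ is positive definite, the map $\mu\mapsto1/\mu$ is a bijection of $\sigma(L_n(\nu))$ onto $\sigma(J_n)$, and counting $\mu\in(4x,4)$ is the same as counting eigenvalues of $J_n$ in the window $(1/4,\,1/(4x))$ just above the bottom $1/4$ of the limiting band.

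Next I would analyze the three-term recurrence attached to $J_n\psi=\lambda\psi$. Inserting the ansatz $\psi_k\sim k^{s}$ and expanding for large $k$ makes the leading term cancel and yields the indicial equation $s(s+1)=-\lambda$, hence $s=-\tfrac12\pm\tfrac{i}{2}\sqrt{4\lambda-1}$ whenever $\lambda>1/4$. Thus for $\lambda$ in the relevant window a real solution behaves like $k^{-1/2}\cos\!\big(\tfrac12\sqrt{4\lambda-1}\,\log k+\phi\big)$, that is, it oscillates in the variable $\log k$ with frequency $\tfrac12\sqrt{4\lambda-1}$. The phase accumulated as $k$ runs from $O(1)$ to $n$ is therefore $\tfrac12\sqrt{4\lambda-1}\,\log n+O(1)$, so the number of sign changes of the solution on $\{0,\ldots,n-1\}$ is $\tfrac{1}{2\pi}\sqrt{4\lambda-1}\,\log n+o(\log n)$.

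I would then invoke Sturm oscillation theory for Jacobi matrices: the number of eigenvalues of $J_n$ below an energy $E$ equals the number of sign changes of the sequence of associated polynomials evaluated at $E$. Because the smallest eigenvalue of $J_n$ equals $1/\|L_n(\nu)\|$ and hence sits just above $1/4$, counting eigenvalues below $E=1/(4x)$ agrees, up to an $o(\log n)$ error, with counting eigenvalues in $(1/4,\,1/(4x))$, and equals the node count of the previous step. Setting $E=1/(4x)$ gives $4E-1=(1-x)/x$, so the count is $\tfrac{1}{2\pi}\sqrt{(1-x)/x}\,\log n+o(\log n)$; dividing by $\log n$ and letting $n\to\infty$ yields the claim.

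The main obstacle is the rigorous asymptotic analysis of the recurrence in the second step: turning the heuristic indicial computation into sharp large-$k$ asymptotics of its solutions, with enough control of the subleading phase that the node count carries an error $o(\log n)$, uniformly in the spectral parameter on the window. This is where a Birkhoff--Adams/Levinson analysis of the difference equation---or, preferably, the explicit solutions in terms of hypergeometric functions available for $L(\nu)$ in~\cite{sta_jfa22}---must be brought to bear, and where one must also check that the boundary-modified last entry of $J_n$ and the non-oscillatory transition region near small $k$ together contribute only $O(1)$ to the count.
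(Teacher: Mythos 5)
Your route is genuinely different from the paper's: the paper never touches oscillation theory, but instead derives asymptotics of $\det(1-\xi L_{n}(\nu))$ for \emph{non-real} $\xi$ (Proposition~\ref{prop:asympt_char_pol_L}) and converts them into an eigenvalue count by the Argument Principle over a contour surrounding $[1/4,1/x]$, squeezed onto the real axis. Your algebraic starting point is the same as the paper's Section~\ref{sec:char_pol}: $L_{n}^{-1}(\nu)$ is tridiagonal, and the polynomials $P_{n}(z;\nu)$ in \eqref{eq:char_pol_L_n_P_n} are exactly the orthogonal polynomials of the Jacobi matrix you call $J_{n}$ (one correction: by \eqref{eq:matrix_A_inv} the $(0,0)$ entry is $b_{0}=\nu(\nu+1)$, not $2\nu^{2}$; only the last diagonal entry is modified to $(n-1+\nu)^{2}$). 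Your indicial computation is correct, $s(s+1)=-\lambda$, matching the paper's exponents $n^{\pm z-1/2}$ with $z=-\ii\sqrt{\xi-1/4}$, and the closing arithmetic does reproduce $\frac{1}{2\pi}\sqrt{(1-x)/x}$.

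The genuine gap is exactly where you place it, and it cannot be waved through: the entire content of the theorem is the locally uniform oscillatory asymptotics of the solution sequence for real $E>1/4$, with phase control sharp enough that the sign-change count has error $o(\log n)$. On the real axis both characteristic solutions $k^{-1/2\pm\ii\omega}$ have equal modulus, the zeros of the polynomials accumulate precisely there, and $o(1)$ perturbations of an oscillating sequence can create or destroy sign changes, so one needs a Pr\"ufer-angle/Levinson-type analysis uniform on compact subsets of $(1/4,\infty)$, or the explicit hypergeometric representation \eqref{eq:P_rel_chi_phi} pushed into the oscillatory regime with a controlled remainder; compare the effort the paper spends in Proposition~\ref{prop:q_n_asympt} and Lemma~\ref{lem:remainder_estim_aux} to achieve just this for the single value $\nu=1$. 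The paper's detour through complex $\xi$ exists precisely to avoid this: off the real axis one of the two terms dominates, so leading-order asymptotics suffice and the Argument Principle does the counting. There is also a secondary defect in your bookkeeping at the lower endpoint: positive definiteness fails for $\nu<0$ (e.g.\ the $(0,0)$ entry $1/\nu$ is negative, so negative eigenvalues occur), and the claim that the smallest eigenvalue of $J_{n}$ lies just above $1/4$ already fails for $0<\nu<\nu_{0}\approx0.3491$, where $\|L_{n}(\nu)\|>4$ for large $n$. The repair is to count the window $(1/4,1/(4x))$ as a difference of two node counts and to show separately that at the band edge $E=1/4$ (double indicial root $s=-1/2$, solutions $\sim k^{-1/2}$ and $k^{-1/2}\log k$, non-oscillatory) the node count is $O(1)$, and that eigenvalues of $L_{n}(\nu)$ outside $[0,4]$ number $O(1)$ uniformly in $n$; none of this is in your argument as written.
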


\begin{thm}\label{thm:l-hilbert_small_evls}
 For $\nu>0$ and $j\in\N$ fixed, we have
 \[
 \mu_{j,n}(\nu)=\frac{1}{4n^{2}}\left[1+\frac{i_{j}}{\sqrt[3]{3}}\,n^{-2/3}+o\left(n^{-2/3}\right)\right]\!,\quad n\to\infty,
 \]
 where $i_{1}<i_{2}<\dots$ are positive zeros of the Airy function, see~\eqref{eq:airy}.
\end{thm}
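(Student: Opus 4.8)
The plan is to pass to the inverse matrix, which is tridiagonal, and then study the \emph{top} of its spectrum, since the smallest eigenvalues $\mu_{j,n}(\nu)$ of $L_{n}(\nu)$ are the reciprocals of the largest eigenvalues of $L_{n}(\nu)^{-1}$. Writing $c_{k}=1/(k+\nu)$, the matrix $(c_{\max(i,j)})_{i,j}$ has a tridiagonal inverse $J_{n}(\nu):=L_{n}(\nu)^{-1}$ whose entries I would verify directly by matching $L_{n}(\nu)J_{n}(\nu)=I$: the off-diagonal entries are $-(i+\nu)(i+1+\nu)$, the bulk diagonal entries are $2(i+\nu)^{2}$, and the two corners are $\nu(\nu+1)$ and $(n-1+\nu)^{2}$. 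Then $L_{n}(\nu)v=\mu v$ is equivalent to $J_{n}(\nu)v=Ev$ with $E=1/\mu$, and the claim becomes the statement that the largest eigenvalues of the Jacobi matrix $J_{n}(\nu)$ satisfy $E_{j,n}=4n^{2}-\tfrac{4}{3^{1/3}}\,i_{j}\,n^{4/3}+o(n^{4/3})$.

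Next I would remove the growth of the coefficients by the gauge substitution $q_{i}=(-1)^{i}(i+\nu)v_{i}$. A short computation turns the three-term recurrence for $v$ into the discrete Schr\"odinger equation
\[
 -(q_{i+1}-2q_{i}+q_{i-1})=\Big(4-\frac{E}{(i+\nu)^{2}}\Big)q_{i},
\]
while the corner at $i=n-1$ forces $q_{n}=-q_{n-1}$, i.e.\ (since $q$ is no longer oscillatory) an effective Dirichlet condition $q\approx0$ at the right end; the left corner produces a condition far from the region of interest and is asymptotically irrelevant. The potential $E/(i+\nu)^{2}$ is decreasing, so for $E$ near $4n^{2}$ the classically allowed region $\{(i+\nu)^{2}>E/4\}$ is a thin layer near the right endpoint, with a turning point at $i+\nu=\sqrt{E}/2$ just to the left of $n-1+\nu$; hence the relevant eigenvectors localize in this layer.

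I would then zoom into the boundary layer. Setting $E=4N^{2}-\Delta$ with $N:=n-1+\nu$ and linearizing the potential about the endpoint, on the scale $i=N-O(N^{1/3})$ the equation reduces to a triangular-well problem $-q''+Fm\,q=\mathcal{E}q$ with $F\sim 8/N$, $\mathcal{E}=\Delta/N^{2}$, and a hard wall at the endpoint. Its rescaled solution decaying into the forbidden region is an Airy function, and the Dirichlet condition selects the zeros of the Airy function, giving $\Delta=\tfrac{4}{3^{1/3}}i_{j}N^{4/3}(1+o(1))$ (the precise constant reflecting the normalization fixed in~\eqref{eq:airy}). Unwinding $N=n+O(1)$ and $\mu_{j,n}(\nu)=1/E_{j,n}$ yields the stated expansion; in particular the $\nu$-dependence enters only at order $n$, below $n^{4/3}$, which is why it does not appear in the answer.

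The main obstacle is to make this turning-point reduction rigorous and uniform in the fixed index $j$: one must show that the discrete recurrence is genuinely approximated by the Airy equation inside the boundary layer with controlled remainder, that the subdominant (decaying) solution in the forbidden region is correctly selected, and that the corner condition is Dirichlet to the accuracy needed for the $n^{-2/3}$ term. I expect this to require either a careful discrete WKB / transfer-matrix estimate with explicit error bounds, or the identification of the solutions of the recurrence with an explicit family of special functions (Bessel-type, matching the continuum operator $-\tfrac{\mathrm d}{\mathrm dx}\!\big(x^{2}\tfrac{\mathrm d}{\mathrm dx}\big)$ whose eigenfunctions are $x^{-1/2}$ times Bessel functions) followed by an appeal to their known uniform Airy asymptotics near the turning point.
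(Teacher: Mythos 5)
Your reduction is correct as far as it goes: the entries you give for $L_{n}^{-1}(\nu)$ are right, the gauge substitution $q_{i}=(-1)^{i}(i+\nu)v_{i}$ does produce the discrete Schr\"odinger equation you write, the corner at $i=n-1$ does force $q_{n}=-q_{n-1}$, and your triangular-well heuristic lands on the correct constants (your $E_{j,n}=4N^{2}-\tfrac{4}{3^{1/3}}i_{j}N^{4/3}+o(N^{4/3})$ is equivalent to the statement). But the proposal is a plan, not a proof, and the gap is exactly where you flag it: the entire content of the theorem is the coefficient of the $n^{-2/3}$ term, and extracting it requires uniform Airy/turning-point asymptotics for the discrete recurrence with error $o(n^{-2/3})$ \emph{relative to the edge}, together with a justification that the right corner is Dirichlet and the left corner is negligible to that same accuracy. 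None of this is carried out; the linearization of the potential and the continuum limit are formal, and the "discrete WKB with explicit error bounds" you invoke as one possible fix is a substantial piece of analysis in its own right.

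The paper avoids this analysis entirely, and its two devices are exactly what you are missing. First, instead of treating the corners asymptotically, it sandwiches the eigenvalues rigorously: by \eqref{eq:A_rel_B_rank-one}, $B_{n}(\nu)=L_{n}^{-1}(\nu)+(n+\nu-1)^{2}e_{n}e_{n}^{T}$ is a rank-one nonnegative perturbation, so Weyl's inequality gives one bound, and since $B_{n-1}(\nu)$ is a principal section of $L_{n}^{-1}(\nu)$, Cauchy interlacing gives the other, yielding \eqref{eq:mu_beta_bounds}; this replaces your "effective Dirichlet / asymptotically irrelevant" claims by exact inequalities. Second, the hard Airy step is not proved but \emph{cited}: writing $\det(x^{2}-B_{n}(\nu))$ as the even-index polynomials of the scalar recurrence \eqref{eq:Q_n_recur} with $\gamma_{n}=b_{\lfloor(n-1)/2\rfloor}=\tfrac{n^{2}}{4}\left(1+O(1/n)\right)$ (via \eqref{eq:Q_2n_recur}--\eqref{eq:Q_n_rel_B_n}), the M\'at\'e--Nevai--Totik theorem (Theorem~\ref{thm:mate-nevai-totik}, with $c=1/2$, $\delta=1$) delivers precisely the edge expansion with Airy zeros and $o(n^{-2/3})$ control. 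So the most economical way to close your gap is not transfer-matrix estimates or Bessel uniform asymptotics, but to recognize your tridiagonal eigenvalue problem as an orthogonal-polynomial zero problem and quote that theorem; combined with the Weyl/interlacing sandwich, your outline then becomes the paper's proof.
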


\begin{thm}\label{thm:l-hilbert_large_evls}
 For $j\in\N$ fixed, we have
 \[
 \mu_{n-j+1,n}= 4-\frac{16\pi^{2}j^{2}}{\log^{2}n}+\frac{32\pi^{2}j^{2}\left(\gamma+6\log 2\right)}{\log^{3}n}+O\left(\frac{1}{\log^{4}n}\right), \quad n\to\infty,
 \]
 where $\gamma$ is the Euler--Mascheroni constant. In particular, the asymptotic expansion for the norm of $L_{n}$ reads
 \[
 \|L_{n}\|= 4-\frac{16\pi^{2}}{\log^{2}n}+\frac{32\pi^{2}\left(\gamma+6\log 2\right)}{\log^{3}n}+O\left(\frac{1}{\log^{4}n}\right), \quad n\to\infty.
 \]  
 (See also~\eqref{eq:norm_asympt} for an expansion in higher order terms.)
\end{thm}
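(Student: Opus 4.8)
The plan is to pass to the inverse matrix. For $\nu=1$ the matrix $L_n$ is positive definite, and since the largest eigenvalues $\mu_{n-j+1,n}$ accumulate at $4$, the reciprocals $\kappa=1/\mu$ of the eigenvalues of $T_n:=L_n^{-1}$ accumulate at $1/4$, the bottom of the spectrum of $T_n$. I would first make this inverse explicit. Writing the eigenvalue equation $L_n(\nu)u=\mu u$ as
\[
\frac{1}{i+\nu}\sum_{j=0}^{i}u_j+\sum_{j=i+1}^{n-1}\frac{u_j}{j+\nu}=\mu u_i ,
\]
taking second differences in $i$ eliminates the sums, and the substitution $p_i:=(i+\nu)u_i$ reduces the problem to the discrete inverse-square Schr\"odinger equation
\[
p_{i+1}-2p_i+p_{i-1}=-\frac{\kappa}{(i+\nu)^2}\,p_i ,\qquad \kappa=\frac{1}{\mu},
\]
for $1\le i\le n-2$, supplemented by a Robin-type left boundary relation at $i=0$ (which fixes the ratio $u_1/u_0$) and the right boundary relation $\sum_{j=0}^{n-1}u_j=\mu(n-1+\nu)u_{n-1}$ at $i=n-1$. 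The continuous counterpart $-p''=\kappa x^{-2}p$ of this equation has the exact solutions $x^{1/2\pm\ii\tau}$ with $\kappa=1/4+\tau^2$, which exhibits the band bottom $\kappa=1/4$, equivalently $\mu=4/(1+4\tau^2)$, as the accumulation point in question.

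The second step is to solve the recurrence in closed form. Its coefficients are rational in $i$ of the hypergeometric type, so the solutions are expressible through hypergeometric functions; here I would invoke the diagonalisation of $L(\nu)$ from~\cite{sta_jfa22} or solve the equation directly. With $\kappa=1/4+\tau^2$ the two independent solutions are oscillatory, $p^{\pm}_i\sim c^{\pm}(\tau)(i+\nu)^{1/2\pm\ii\tau}$ as $i\to\infty$. The left boundary relation singles out, up to normalisation, one regular solution $\phi_i(\tau)$, and matching $\phi$ to the oscillatory asymptotics produces a connection coefficient $A(\tau)$ with $\phi_{n-1}(\tau)\sim 2\,\Re\!\big[A(\tau)(n+\nu)^{1/2+\ii\tau}\big]$. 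Substituting into the right boundary relation turns the eigenvalue condition into a quantization equation of the form
\[
\tau\log(n+\nu)+\theta(\tau)=\pi j+o(1),
\]
where the phase $\theta(\tau)=\arg A(\tau)+(\text{boundary phase})$ is $n$-independent, analytic near $\tau=0$, and odd, so that $\theta(0)=0$.

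In the third step I would solve the quantization equation near the edge. Since $\tau_j=O(1/\log n)$, expanding $\theta(\tau)=\theta'(0)\tau+O(\tau^3)$ yields
\[
\tau_j=\frac{\pi j}{\log(n+\nu)+\theta'(0)}\Big(1+O(\log^{-2}n)\Big),
\]
so the effective length is $\log n+C+o(1)$ with $C:=\theta'(0)$. Inserting this into $\mu_{n-j+1,n}=4/(1+4\tau_j^2)$ and expanding in powers of $1/\log n$ gives
\[
\mu_{n-j+1,n}=4-\frac{16\pi^2 j^2}{(\log n+C)^2}+O\!\left(\frac{1}{\log^4 n}\right)=4-\frac{16\pi^2 j^2}{\log^2 n}+\frac{32\pi^2 j^2 C}{\log^3 n}+O\!\left(\frac{1}{\log^4 n}\right),
\]
which is the claimed expansion once $C=\gamma+6\log2$ is identified; the norm statement is the case $j=1$, and the leading two terms already recover Wilf's formula~\eqref{eq:norm_L_n_wilf}.

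The main obstacle is the exact evaluation of the edge constant $C=\theta'(0)=\gamma+6\log2$. This demands the precise connection coefficient $A(\tau)$ of the hypergeometric solution together with a careful order-$1/\log n$ analysis of both boundary relations: the Euler--Mascheroni constant $\gamma$ enters through $\psi(1)=-\gamma$ upon differentiating the $\Gamma$-quotients in $A(\tau)$ at $\tau=0$, while the term $6\log2$ arises from the duplication formula and the specific value $\nu=1$ (which is also why the computation is carried out only in this case). A secondary technical point is to make all remainder estimates uniform in $j$ and strong enough to secure the $O(\log^{-4}n)$ error; in particular one must justify discarding the discrepancy $\log(n+\nu)-\log n=O(1/n)$ and the $O(\tau^3)$ contribution of $\theta$, both of which affect $\mu_{n-j+1,n}$ only beyond the order retained.
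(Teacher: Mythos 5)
Your reduction of the eigenvalue problem to the discrete inverse-square Schr\"odinger equation is correct --- it is exactly the eigenvalue equation for the tridiagonal inverse of Section~\ref{sec:char_pol} after the substitution $p_i=(i+\nu)u_i$ --- and your overall skeleton (regular solution, connection coefficient $A(\tau)$, quantization condition $\tau\log n+\theta(\tau)=\pi j+\mathrm{error}$, edge constant $C=\theta'(0)$) is structurally parallel to the paper's proof: there the role of your $A(\tau)$ is played by $F(\xi)=\Gamma(1+2\ii\xi)/\Gamma^{3}(1/2+\ii\xi)$, the quantization condition is $\Im F(\xi_{j,n})\cos(\xi_{j,n}\log n)+\Re F(\xi_{j,n})\sin(\xi_{j,n}\log n)\to0$ coming from Proposition~\ref{prop:q_n_asympt}, and the constant is $\theta'(0)=\Im F'(0)/F(0)=\gamma+6\log 2$. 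The methodological difference is that the paper does \emph{not} obtain the oscillatory asymptotics from connection coefficients of solutions of the recurrence; it applies the Darboux method to the generating function \eqref{eq:gener_func_q_n} of the continuous dual Hahn polynomials, with an explicit, $\xi$-uniform remainder bound.

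That difference is not cosmetic, and it is where your proposal has genuine gaps. (i) \emph{Error control at the band edge.} Your quantization equation carries only an $o(1)$ error, which determines $\tau_j$ only up to $o(1/\log n)$ and hence $\mu_{n-j+1,n}$ only up to $o(1/\log^{2}n)$; as stated it cannot produce the $1/\log^{3}n$ term, let alone the $O(1/\log^{4}n)$ remainder (your display with relative error $O(\log^{-2}n)$ silently assumes a quantization error of that size, which is never established). One needs a phase equation with error at most $O(1/\log^{2}n)$, valid uniformly on the shrinking scale $\tau\asymp 1/\log n$ --- and this is delicate precisely because the connection problem degenerates there: $A(\tau)$ blows up like $1/\tau$ as $\tau\to0$, just as $\Gamma(2z)\chi(z;\nu)$ in Proposition~\ref{prop:asympt_char_pol_L} does, whose expansion is only locally uniform in $\C\setminus(\Z/2)$, i.e.\ away from $z=0$. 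This uniformity is the technical heart of the theorem: Proposition~\ref{prop:q_n_asympt}, Lemma~\ref{lem:remainder_estim_aux} and the appendix exist exactly to prove the bound $|R_n(\xi)|\le C_K/(n\xi)$, after which expansion to any power of $1/\log n$ becomes legitimate. Calling this a ``secondary technical point'' understates it; it is the reason the paper avoids the connection-coefficient route in the oscillatory regime. (ii) \emph{The constant itself.} The identification $C=\gamma+6\log 2$ is asserted, not derived: you indicate where $\gamma$ and $6\log2$ should come from, but never compute $A(\tau)$ or $\theta'(0)$. Since the whole point of the theorem beyond Wilf's formula \eqref{eq:norm_L_n_wilf} is this explicit third-order coefficient, the proposal defers the statement's actual content. (In the paper this step is the easy part: $F(0)=\pi^{-3/2}$ and $F'(0)=\ii(\gamma+6\log 2)\pi^{-3/2}$ follow from $\Gamma'(1)=-\gamma$ and $\Gamma'(1/2)=-\sqrt{\pi}\,(\gamma+2\log 2)$, and then \eqref{eq:xi_asympt_expand} and the theorem follow.)
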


Let us briefly compare the obtained results for the Hilbert $L$-matrix with the known asymptotic properties of the Hilbert matrix. The stable part of the spectrum of $H(\nu)$ is the interval $[0,\pi]$ which, in the case of $L(\nu)$, is replaced by the interval $[0,4]$. We see from Theorem~\ref{thm:l-hilbert_limit} that the number of eigenvalues of $L_{n}(\nu)$ outside any neighborhood of $0$ is again proportional to $\log n$, for $n$ large, as in the case of $H_{n}(\nu)$, although the limiting distribution is dictated by a different function, recall~\eqref{eq:hilbert_number_evls_limit}. Notice different orders of the singularities at the origin of the distribution functions which are $O(\log x)$ and $O(1/\sqrt{x})$, as $x\to0+$, in the case of~$H_{n}(\nu)$ and $L_{n}(\nu)$, respectively. 

By comparing Theorem~\ref{thm:l-hilbert_small_evls} with~\eqref{eq:hilbert_smallest_evl_asympt}, we see that, for $\nu>0$, the smallest eigenvalue of $L_{n}(\nu)$ decays as $1/n^{2}$ rather than exponentially for $n$ large. Notice also that the first two terms in the asymptotic expansion of $\mu_{j,n}(\nu)$, for $n\to\infty$, are independent of parameter $\nu>0$.

By Theorem~\ref{thm:l-hilbert_large_evls} and~\eqref{eq:hilbert_largest_evl_asympt}, asymptotic behaviors of the largest eigenvalues of $H_{n}(2)$ and $L_{n}$ are comparable. Both tend to the respective limit points, either $\pi$ or $4$ at the rate proportional to $1/\log^{2} n$. Moreover, we see from Theorem~\ref{thm:l-hilbert_large_evls} that the remainder in Wilf's formula~\eqref{eq:norm_L_n_wilf} is actually $O(1/\log^{3} n)$. In fact, it follows from our proof of Theorem~\ref{thm:l-hilbert_large_evls} that $\|L_{n}\|$ can be asymptotically expanded to a linear combination of negative powers of $\log n$ up to an arbitrary order, however, higher order coefficients become quickly complicated, see~\eqref{eq:norm_asympt}.

Let us remark that we do not try to investigate an asymptotic behavior of large eigenvalues of $L_{n}(\nu)$ for more general values of parameter $\nu$ in this article. Our proof differs from Wilf's approach. Rather than approximating by integral operators, we analyze asymptotic behavior of the characteristic polynomial of $L_{n}$ directly. Although the restriction to homogeneous kernels of order $-1$ is of no importance in the present method, an extraction of the leading term in an asymptotic formula for the characteristic polynomial of $L_{n}$ has to be accompanied by a certain explicit control of the remainder, see Proposition~\ref{prop:q_n_asympt}. It is possible for $\nu=1$ since formulas are still of a reasonable complexity. In a greater generality, if $\nu\geq\nu_{0}$, the approach seems to be still applicable since respective formulas remain expressible as integrals of hypergeometric functions, but their analysis will be more involved. Some flavor of the formulas can be seen from~\cite{sta_jfa22}. If $0<\nu<\nu_{0}$, an asymptotic expansion of $\|L_{n}(\nu)\|$ is expected to be less explicit since the norm of the limiting operator $L(\nu)$ is known only implicitly as a~zero of a~special function. A similar remark applies also to small eigenvalues of $L_{n}(\nu)$, for $\nu<0$, because a negative eigenvalue appears in the spectrum of $L(\nu)$ and the Mat{\' e}--Nevai--Totik theorem used in the proof of Theorem~\ref{thm:l-hilbert_small_evls} is no more applicable.

We want to mention a nice and relevant result due to Pushnitski that appeared only recently in~\cite[Thm.~2.1]{pus_preprint}. Similarly as the above mentioned Wilf's theorem, it applies to a general class of matrices given by symmetric homogeneous of degree $-1$ kernels and gives their asymptotic spectral densities. In particular, it readily yields formula~\eqref{eq:hilbert_number_evls_limit} for $\nu=2$ as well as Theorem~\ref{thm:l-hilbert_limit} for $\nu=1$; see~\cite[Examples~3.1 and~3.2]{pus_preprint}.

The paper is organized as follows. In Section~\ref{sec:char_pol}, preliminary results on the characteristic polynomial and the spectrum of $L_{n}(\nu)$ are deduced. Particularly, the characteristic polynomial of $L_{n}$ is recognized as a terminating hypergeometric series which is used later in the proof of Theorem~\ref{thm:l-hilbert_large_evls}. The proof of Theorem~\ref{thm:l-hilbert_limit} is given in Section~\ref{sec:number_evls_limit}. The asymptotic behavior of small eigenvalues of $L_{n}(\nu)$, for $\nu>0$, is explored in Section~\ref{sec:small_evls}. An asymptotic analysis of large eigenvalues of $L_{n}$ is worked out in Section~\ref{sec:large_evls}. The paper is concluded by an appendix, where a proof of a technical lemma is postponed to and few more computational details of higher order coefficients in the asymptotic expansion of $\|L_{n}\|$ are given.

\section{Characteristic polynomial and spectrum}\label{sec:char_pol}

An important observation for an upcoming analysis is that $L_{n}(\nu)$ is a regular matrix with a tridiagonal inverse. More generally, by a simple linear algebra, one checks that the determinant of an $L$-matrix
\begin{equation}
 A_{n}=
 \begin{pmatrix}
 a_{0} & a_{1} & a_{2} & \dots & a_{n-1}\\
 a_{1} & a_{1} & a_{2} & \dots & a_{n-1}\\
 a_{2} & a_{2} & a_{2} & \dots & a_{n-1}\\
 \vdots & \vdots & \vdots & \ddots & \vdots \\
 a_{n-1} & a_{n-1} & a_{n-1} & \dots & a_{n-1}\\
 \end{pmatrix}
\label{eq:def_L-matrix}
\end{equation}
reads
\begin{equation}
 \det A_{n}=a_{n-1}\prod_{j=1}^{n-1}(a_{j-1}-a_{j}).
\label{eq:det_A}
\end{equation}
Consequently, $A_{n}$ is regular if and only if $a_{j-1}\neq a_{j}$, for all $j\in\{1,2,\dots,n-1\}$, and $a_{n-1}\neq0$. In this case, the inverse of $A_{n}$ is the tridiagonal matrix 

\begin{equation}
A_{n}^{-1}=
\begin{pmatrix}
 b_{0} & -b_{0} \\
 -b_{0} & b_{0}+b_{1} & -b_{1} \\
 & -b_{1} & b_{1}+b_{2} & -b_{2} \\
 & & \ddots & \ddots & \ddots \\
 & & & -b_{n-3} & b_{n-3}+b_{n-2} & -b_{n-2} \\
 & & & & -b_{n-2} & \frac{a_{n-2}}{a_{n-1}}b_{n-2}
 \end{pmatrix},
\label{eq:matrix_A_inv}
\end{equation}
where
\[
b_{j}=\frac{1}{a_{j}-a_{j+1}},
\]
which can be verified by a direct multiplication of the two matrices. For later purposes, we also define the Jacobi matrix
\begin{equation}
B_{n}=
\begin{pmatrix}
 b_{0} & -b_{0} \\
 -b_{0} & b_{0}+b_{1} & -b_{1} \\
 & -b_{1} & b_{1}+b_{2} & -b_{2} \\
 & & \ddots & \ddots & \ddots \\
 & & & -b_{n-3} & b_{n-3}+b_{n-2} & -b_{n-2} \\
 & & & & -b_{n-2} & b_{n-2}+b_{n-1}
 \end{pmatrix},
\label{eq:matrix_B}
\end{equation}
which differs from~\eqref{eq:matrix_A_inv} only by the last diagonal entry
\begin{equation}
 A_{n}^{-1}=B_{n}-\frac{a_{n}b_{n-1}}{a_{n-1}}e_{n}e_{n}^{T}.
\label{eq:A_rel_B_rank-one}
\end{equation}
Here $e_{n}=(0,0,\dots,0,1)^{T}$ denotes the last vector of the standard basis of $\C^{n}$. This fact implies a relation between characteristic polynomials of $A_{n}$ and $B_{n}$ that can be expressed as follows:
\begin{equation}
  \det(1-zA_{n})=(-1)^{n}\left[\prod_{j=1}^{n-1}(a_{j-1}-a_{j})\right]\left[a_{n-1}\det(z-B_{n})+a_{n}b_{n-1}\det(z-B_{n-1})\right],
\label{eq:char_pol_A_n}
\end{equation}
for all $z\in\C$ and $n\geq2$; see the proof of \cite[Thm.~13]{sta_jfa22} for details.

The generalized Hilbert $L$-matrix coincides with~\eqref{eq:def_L-matrix} for
\[
 a_{j}:=\frac{1}{j+\nu}.
\]
Then entries of $B_{n}$ are given by the sequence
\[
 b_{j}=\frac{1}{a_{j}-a_{j+1}}=(j+\nu)(j+1+\nu),
\]
and equation~\eqref{eq:char_pol_A_n} can be written as
\begin{equation}
\det(1-z L_{n}(\nu))=(n+\nu)P_{n}(z;\nu)-(n+\nu-1)P_{n-1}(z;\nu),
\label{eq:char_pol_L_n_P_n}
\end{equation}
for $n\geq2$, where
\[
 P_{n}(z;\nu):=\frac{(-1)^{n}}{(\nu)_{n}(\nu+1)_{n}}\det(z-B_{n})
\]
are orthonormal polynomials corresponding to Jacobi matrix~$B_{n}$ and $(\alpha)_{n}:=\alpha(\alpha+1)\dots(\alpha+n-1)$ is the Pochhammer symbol (the measure of orthogonality of polynomials $P_{n}(\;\cdot\;;\nu)$ was obtained in~\cite[Prop.~26]{sta_jfa22}). It was shown in~\cite[Eq.~38]{sta_jfa22} that polynomials $P_{n}$ can be expressed in terms of the hypergeometric series, namely
\begin{equation}
 P_{n}\!\left(\frac{1}{4}-z^{2};\nu\right)=\frac{\pi}{\sin(2\pi z)}\left[\chi(z;\nu)\phi_{n}(-z;\nu)-\chi(-z;\nu)\phi_{n}(z;\nu)\right],
\label{eq:P_rel_chi_phi}
\end{equation}
where
\begin{equation}
\phi_{n}(z;\nu)=\frac{\Gamma(n+\nu+1)}{\Gamma(2z+1)\Gamma(n+\nu+z+3/2)}\,\pFq{3}{2}{z+1/2, z+1/2, z+3/2}{2z+1,n+\nu+z+3/2}{1},
\label{eq:def_phi}
\end{equation}
for $n\in\N_{0}$, $n+\nu>0$, $z\in\C$, and
\begin{equation}
   \chi(z;\nu)=\frac{(z+1/2)\Gamma(\nu)\Gamma(\nu+1)}{\Gamma(z+1/2)\Gamma^{2}(z+\nu+1/2)}\,\pFq{3}{2}{\nu-1,\nu+1,z+1/2}{z+\nu+1/2,z+\nu+1/2}{1},
\label{eq:def_chi}
\end{equation}
for $\nu\in\R\setminus(-\N_{0})$ and $\Re z>-1/2$. It is explained in~\cite[Subsec.~3.2]{sta_jfa22} that both functions $\phi_{n}$ and $\chi$ extend analytically to entire functions in $z$ and meromorphic functions in $\nu$ with poles in $-\N_{0}$. A combination of formulas~\eqref{eq:char_pol_L_n_P_n} and~\eqref{eq:P_rel_chi_phi} yields a somewhat complicated expression for the characteristic polynomial of $L_{n}(\nu)$, however, it is useful for an asymptotic analysis which will be demonstrated in Section~\ref{sec:number_evls_limit}.

Next, we summarize basic properties of eigenvalues of $L_{n}(\nu)$. Recall that $\|L(\nu)\|=4$, if $\nu\geq\nu_{0}\approx0.3491$, and $\|L(\nu)\|\to\infty$, for $\nu\to0+$; see~\cite[Thms.~37 and~38]{sta_jfa22}.

\begin{thm}\label{thm:spec_L_n}
For all $n\in\N$ and $\nu\in\R\setminus(-\N_{0})$, eigenvalues of $L_{n}(\nu)$ are simple and, if $\nu>0$, then 
\[
 0<\mu_{1,n}(\nu)<\mu_{2,n}(\nu)<\dots<\mu_{n,n}(\nu)\leq\|L(\nu)\|.
\]
\end{thm}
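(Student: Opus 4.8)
```latex
The plan is to prove the three assertions---simplicity, positivity, and the upper bound by $\|L(\nu)\|$---in sequence, since each relies on a different structural fact established in the excerpt. For \emph{simplicity}, I would exploit the tridiagonal structure of the inverse. By~\eqref{eq:matrix_A_inv}, the matrix $L_{n}(\nu)^{-1}$ is a Jacobi matrix whose off-diagonal entries are $-b_{j}=-(j+\nu)(j+1+\nu)$, all of which are nonzero for $\nu\in\R\setminus(-\N_{0})$ and $j\in\{0,\dots,n-2\}$. A Jacobi (tridiagonal symmetric) matrix with all off-diagonal entries nonzero has simple spectrum---this is the standard fact that such a matrix is \emph{unreduced}, so that the eigenvector recursion is determined up to scale by its first component and no eigenvalue can have a two-dimensional eigenspace. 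Since $L_{n}(\nu)$ and $L_{n}(\nu)^{-1}$ share eigenvectors (with reciprocal eigenvalues), the eigenvalues of $L_{n}(\nu)$ are likewise simple.

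For the \emph{positivity} claim when $\nu>0$, I would show $L_{n}(\nu)$ is positive definite, equivalently $L_{n}(\nu)^{-1}$ is positive definite. From the off-diagonal entries $b_{j}>0$ and the diagonal structure of~\eqref{eq:matrix_A_inv}, the inverse is a Jacobi matrix of the form $b_{j-1}+b_{j}$ on the diagonal (with modified last entry $\tfrac{a_{n-2}}{a_{n-1}}b_{n-2}$) and $-b_{j}$ off-diagonal. Such a matrix is diagonally dominant in a weak sense, but the clean argument is to read off a factorization: the associated quadratic form telescopes. Writing $x=(x_{0},\dots,x_{n-1})^{T}$, one computes $\langle L_{n}(\nu)^{-1}x,x\rangle=\sum_{j=0}^{n-2}b_{j}(x_{j}-x_{j+1})^{2}+\tfrac{a_{n-1}-a_{n}}{a_{n-1}}b_{n-1}x_{n-1}^{2}$, and since every $b_{j}>0$ and $a_{n-1}>a_{n}>0$ for $\nu>0$, this form is strictly positive unless $x=0$. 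Hence $L_{n}(\nu)^{-1}$, and therefore $L_{n}(\nu)$, is positive definite, giving $0<\mu_{1,n}(\nu)$.

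For the \emph{upper bound} $\mu_{n,n}(\nu)\leq\|L(\nu)\|$, I would use the fact that $L_{n}(\nu)$ is the $n\times n$ compression of the bounded self-adjoint operator $L(\nu)$ on $\ell^{2}(\N_{0})$. If $P_{n}$ denotes the orthogonal projection onto the span of the first $n$ basis vectors, then $L_{n}(\nu)=P_{n}L(\nu)P_{n}$, so for any unit vector $x$ supported on the first $n$ coordinates, $\langle L_{n}(\nu)x,x\rangle=\langle L(\nu)x,x\rangle\leq\|L(\nu)\|$. Taking the supremum over such $x$ yields $\mu_{n,n}(\nu)=\|L_{n}(\nu)\|\leq\|L(\nu)\|$, where the first equality holds because $L_{n}(\nu)$ is positive (semi)definite. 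The values $\|L(\nu)\|=4$ for $\nu\geq\nu_{0}$ and the finiteness of $\|L(\nu)\|$ for general $\nu>0$ are recorded in the excerpt, so this step is immediate once the compression identity is in place.

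I expect the main obstacle to be the \emph{simplicity} claim in the full stated generality $\nu\in\R\setminus(-\N_{0})$, rather than only $\nu>0$. For $\nu>0$ everything rests on positive definiteness and the unreduced Jacobi structure, both transparent. For general real $\nu$ outside $-\N_{0}$, positivity fails and one must argue simplicity purely from the tridiagonal form of $L_{n}(\nu)^{-1}$: the subtlety is that the last diagonal entry of the inverse differs from a pure Jacobi matrix (compare~\eqref{eq:matrix_A_inv} with~\eqref{eq:matrix_B}), so one must check that the nonvanishing of the off-diagonal entries $b_{j}$ alone---which holds for all admissible $\nu$---still forces simple spectrum regardless of the diagonal. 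This is indeed the case, since the unreduced property depends only on the off-diagonal entries being nonzero, so the eigenvector recursion argument goes through verbatim; I would make this the crux of the proof and treat the positivity and norm bounds as corollaries valid for $\nu>0$.
```
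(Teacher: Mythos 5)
Your proposal is correct, and two of its three parts coincide with the paper's own proof: simplicity is obtained exactly as in the paper, from the fact that $L_{n}^{-1}(\nu)$ is tridiagonal with non-vanishing off-diagonal entries $-(j+\nu)(j+1+\nu)$ (so eigenvectors are fixed up to scale by the three-term recursion, independently of the diagonal---your remark that the modified last diagonal entry is irrelevant is exactly right), and the bound $\mu_{n,n}(\nu)\leq\|L(\nu)\|$ is the same compression argument $\mu_{n,n}(\nu)=\|L_{n}(\nu)\|=\|P_{n}L(\nu)P_{n}\|\leq\|L(\nu)\|$. Where you genuinely differ is the positivity step: the paper invokes the determinant formula~\eqref{eq:det_A} (applied to all leading principal sections, i.e.\ Sylvester's criterion) to conclude that an $L$-matrix is positive definite iff $a_{0}>a_{1}>\dots>a_{n-1}>0$, whereas you write the quadratic form of the inverse as the explicit sum of squares
\[
 \langle L_{n}^{-1}(\nu)x,x\rangle=\sum_{j=0}^{n-2}b_{j}(x_{j}-x_{j+1})^{2}+\frac{1}{a_{n-1}}\,x_{n-1}^{2},
\]
which is a correct identity (your coefficient $\tfrac{a_{n-1}-a_{n}}{a_{n-1}}b_{n-1}$ simplifies to $1/a_{n-1}$) and gives strict positive definiteness immediately, since vanishing forces all $x_{j}$ equal and $x_{n-1}=0$. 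Your telescoping argument is more self-contained and makes the positivity transparent without appealing to principal minors; the paper's route is shorter given that~\eqref{eq:det_A} is already on the table and yields the slightly stronger ``if and only if'' characterization for general $L$-matrices.
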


\begin{proof}
Let $n\in\N$ and $-\nu\notin\N_{0}$. Since $L_{n}^{-1}(\nu)$ is a tridiagonal matrix with non-vanishing off-diagonal elements its eigenvectors are determined uniquely by the eigenvalue equation up to multiplicative constants. Hence all eigenvalues of $L_{n}^{-1}(\nu)$ are simple and the same is then true for $L_{n}(\nu)$, of course.

It follows from the general formula for determinant~\eqref{eq:det_A} that an $L$-matrix $A_{n}$ is positive definite if and only if $a_{0}>a_{1}>\dots>a_{n-1}>0$, which is true for the case of matrix $L_{n}(\nu)$, where $a_{n}=1/(n+\nu)$, if $\nu>0$. This means that $\mu_{1,n}(\nu)>0$ for all $\nu>0$ and $n\in\N$.

Finally, assuming still that $\nu>0$, we have
\[
 \mu_{n,n}(\nu)=\|L_{n}(\nu)\|=\|P_{n}L(\nu)P_{n}\|\leq\|L(\nu)\|,
\]
where we denote by $P_{n}$ the orthogonal projection onto the span of the first $n$ vectors of the standard basis of~$\ell^{2}(\N_{0})$.
\end{proof}

Recall that we do not designate the dependence on $\nu$ when $\nu=1$, i.e., $L_{n}\equiv L_{n}(1)$ and $\mu_{j,n}\equiv \mu_{j,n}(1)$. In Section~\ref{sec:large_evls}, we restrict ourselves to the particular case $\nu=1$. As preliminary results, we show here the formula for $\det(1-zL_{n})$ simplifies to a form of a single terminating hypergeometric series and summarize basic properties of the spectrum of $L_{n}$ in the next statement.

\begin{thm}\label{thm:spec_L_n_nu=1}
For all $n\in\N$, one has
 \begin{equation}
  \det(1-zL_{n})=\pFq{3}{2}{-n,1/2+\ii\sqrt{z-1/4}, 1/2-\ii\sqrt{z-1/4}}{1,1}{1}
 \label{eq:char_pol_L}
 \end{equation}
and
\[
 0<\mu_{1,n}<\mu_{2,n}<\dots<\mu_{n,n}<4.
\]
\end{thm}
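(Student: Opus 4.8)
I would prove the two claims separately, and reduce the determinant identity~\eqref{eq:char_pol_L} to a comparison of three-term recurrences. The key observation is that the right-hand side of~\eqref{eq:char_pol_L}, say $F_{n}(z):=\pFq{3}{2}{-n,1/2+\ii\sqrt{z-1/4},1/2-\ii\sqrt{z-1/4}}{1,1}{1}$, is a suitably normalized continuous dual Hahn polynomial: writing $z=1/4+s^{2}$ so that $1/2\pm\ii\sqrt{z-1/4}=1/2\pm\ii s$, it is the terminating ${}_{3}F_{2}$ with parameters $a=b=c=1/2$ in the variable $s^{2}$. Hence $F_{n}$ obeys the standard continuous dual Hahn recurrence, which for these parameters collapses (after an index shift) to
\[
 n^{2}F_{n}(z)=(2n^{2}-2n+1-z)F_{n-1}(z)-(n-1)^{2}F_{n-2}(z), \qquad n\geq2,
\]
with $F_{0}=1$ and $F_{1}=1-z$. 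One may instead verify this recurrence directly by creative telescoping; note that $(1/2+\ii s)_{k}(1/2-\ii s)_{k}=\prod_{i=0}^{k-1}(z+i(i+1))$, so $F_{n}$ is a genuine polynomial in $z$ of degree $n$.

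The plan is then to show that $D_{n}(z):=\det(1-zL_{n})$ satisfies the very same recurrence with the same initial data, after which~\eqref{eq:char_pol_L} follows by induction on $n$. The initial values $D_{0}=1$ and $D_{1}=1-z$ are immediate. For the recurrence I would start from~\eqref{eq:char_pol_L_n_P_n} at $\nu=1$, which reads $D_{n}=(n+1)P_{n}-nP_{n-1}$ with $P_{m}=P_{m}(z;1)$, together with the three-term recurrence of the characteristic polynomials $Q_{m}:=\det(z-B_{m})$ of the Jacobi matrix~\eqref{eq:matrix_B}. For $\nu=1$ one has $b_{j}=(j+1)(j+2)$, so $B_{m}$ has diagonal entries $2(j+1)^{2}$ and off-diagonal entries $-(j+1)(j+2)$, whence
\[
 Q_{m}=(z-2m^{2})Q_{m-1}-(m-1)^{2}m^{2}Q_{m-2}.
\]
Normalizing $P_{m}=(-1)^{m}Q_{m}/(m!(m+1)!)$ turns this into $m(m+1)P_{m}=(2m^{2}-z)P_{m-1}-m(m-1)P_{m-2}$. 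Substituting $P_{n}$ from this relation into $D_{n}=(n+1)P_{n}-nP_{n-1}$ and using it once more at index $n-1$ to eliminate $P_{n-3}$, a short elimination yields exactly $n^{2}D_{n}=(2n^{2}-2n+1-z)D_{n-1}-(n-1)^{2}D_{n-2}$. This elimination—matching the coefficients of $P_{n-1}$ and $P_{n-2}$ on the two sides—is the only step demanding genuine care, and I expect it to be the main, though elementary, obstacle; the remainder is bookkeeping.

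For the spectral ordering I would invoke Theorem~\ref{thm:spec_L_n}: since $\nu=1\geq\nu_{0}$ gives $\|L(1)\|=4$, that theorem already yields $0<\mu_{1,n}<\mu_{2,n}<\dots<\mu_{n,n}\leq4$, so only the strict inequality $\mu_{n,n}<4$ remains. Here I would use that the spectrum of $L(1)$ is purely absolutely continuous and fills $[0,4]$ (see~\cite{sta_jfa22}), so that $4$ is not an eigenvalue of $L(1)$. Writing $\mu_{n,n}=\|P_{n}L(1)P_{n}\|=\max_{\psi\in\Ran P_{n},\,\|\psi\|=1}\langle L(1)\psi,\psi\rangle$ with $P_{n}$ the projection onto the span of the first $n$ basis vectors of $\ell^{2}(\N_{0})$, a value $\langle L(1)\psi,\psi\rangle=4$ for a unit vector $\psi$ would force $(4-L(1))^{1/2}\psi=0$, i.e. $\psi$ an eigenvector of $L(1)$ at $4$, which is impossible. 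Since the unit sphere of the finite-dimensional space $\Ran P_{n}$ is compact and $\psi\mapsto\langle L(1)\psi,\psi\rangle$ is continuous, the maximum is attained and hence lies strictly below $4$, giving $\mu_{n,n}<4$.
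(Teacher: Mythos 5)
Your proposal is correct, and it reaches both conclusions by a genuinely different route than the paper. For \eqref{eq:char_pol_L}, the paper does not argue by recurrence: it imports from \cite{sta_jfa22} the explicit representation $P_{n}(z;1)={}_{3}F_{2}(-n,\xi_{+},\xi_{-};1,2;1)$, $\xi_{\pm}=\tfrac12\pm\ii\sqrt{z-1/4}$ (a continuous dual Hahn polynomial with parameters $(1/2,1/2,3/2)$), and collapses $(n+1)P_{n}-nP_{n-1}$ from \eqref{eq:char_pol_L_n_P_n} into a single series using the Pochhammer identity $(n+1)(-n)_{k}-n(1-n)_{k}=(k+1)(-n)_{k}$, which turns the denominator parameters $(1,2)$ into $(1,1)$. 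You instead show that both sides of \eqref{eq:char_pol_L} satisfy
\[
 n^{2}F_{n}(z)=(2n^{2}-2n+1-z)F_{n-1}(z)-(n-1)^{2}F_{n-2}(z),
\]
the determinant side via the tridiagonal structure of $B_{n}$ together with \eqref{eq:char_pol_L_n_P_n}, and the ${}_{3}F_{2}$ side via the continuous dual Hahn recurrence of \cite{koe-les-swa_10} at $a=b=c=1/2$; this is exactly the recurrence of Remark~\ref{rem:cdh_polyn}, so in effect you prove the theorem by reversing the paper's logical order (recurrence first, identity second). Your elimination does go through: the $m=n$ relation gives $n^{2}D_{n}=n(n^{2}-z)P_{n-1}-n^{2}(n-1)P_{n-2}$, and the $m=n-1$ relation reduces your right-hand side to the same combination, so the step you single out as the main obstacle is sound; the only extra care needed is the induction base (at $n=2$ one uses the degenerate $m=1$ relation, whose $P_{-1}$ coefficient vanishes, or simply checks $D_{2}=1-\tfrac32 z+\tfrac14 z^{2}$ by hand). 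What each approach buys: the paper's is shorter once the hypergeometric formula for $P_{n}$ is granted; yours needs only recurrence data, not the closed form of $P_{n}$. For the strict bound $\mu_{n,n}<4$ the difference is sharper. The paper stays with the polynomial it just computed: evaluating \eqref{eq:char_pol_L} at $z=1/4$ and applying Sheppard's transformation exhibits $\det(1-\tfrac14 L_{n})$ as a sum of manifestly positive terms, a self-contained finite computation. You argue operator-theoretically: equality $\langle L(1)\psi,\psi\rangle=4$ for a unit $\psi\in\Ran P_{n}$ would make $\psi$ an eigenvector of $L(1)$ at $4$, contradicting the purely absolutely continuous spectrum of $L(1)$ established in \cite{sta_jfa22}. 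That argument is correct (and, interestingly, never uses \eqref{eq:char_pol_L}), but it rests on the full spectral analysis of the semi-infinite operator --- a much heavier external input than the paper's elementary positivity check.
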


\begin{rem}
At this point, any branch of the square root can be taken in~\eqref{eq:char_pol_L} but later on, we will always use the standard branch.
\end{rem}

\begin{proof}[Proof of Theorem~\ref{thm:spec_L_n_nu=1}]
It was observed in~\cite[Subsec.~3.1]{sta_jfa22} that, if $\nu=1$, polynomials $P_{n}$ can be expressed in terms of the continuous dual Hahn orthogonal polynomials 
 \begin{equation}
  S_{n}(x^{2};a,b,c):=(a+b)_{n}(a+c)_{n}\,\pFq{3}{2}{-n,a+\ii x,a-\ii x}{a+b,a+c}{1},
 \label{eq:cont_dual_hahn_ogp}
 \end{equation}
see~\cite[Sec.~9.3]{koe-les-swa_10}. Namely, one has the identity
 \[
  P_{n}(z;1)=\frac{1}{n!(n+1)!}\;S_{n}\left(z-\frac{1}{4};\frac{1}{2},\frac{1}{2},\frac{3}{2}\right)=
  \pFq{3}{2}{-n,\xi_{+},\xi_{-}}{1,2}{1},
 \]
where we used the temporary notation
\[
 \xi_{\pm}:=\frac{1}{2}\pm\ii\sqrt{z-\frac{1}{4}}
\]
for brevity. Next, using~\eqref{eq:char_pol_L_n_P_n} with $\nu=1$ and the very definition of the hypergeometric function, one easily verifies that
 \begin{align*}
 \det(1-zL_{n})&=(n+1)\pFq{3}{2}{-n,\xi_{+},\xi_{-}}{1,2}{1}-n\,\pFq{3}{2}{-n+1,\xi_{+},\xi_{-}}{1,2}{1}\\
 &=\sum_{k=0}^{n}\frac{(-n)_{k}(\xi_{+})_{k}(\xi_{-})_{k}}{(k!)^{3}}=\pFq{3}{2}{-n,\xi_{+},\xi_{-}}{1,1}{1}.
 \end{align*}

Next, it follows from Theorem~\ref{thm:spec_L_n} and the equality $\|L(1)\|=4$ that $0<\mu_{1,n}<\mu_{2,n}<\dots<\mu_{n,n}\leq4$. Thus, to conclude the proof, it suffices to show that $4$ is not an eigenvalue of~$L_{n}$. To this end, we use the already proven formula~\eqref{eq:char_pol_L} with $z=1/4$ and
Sheppard's transformation 
\[
 \pFq{3}{2}{-n,a,b}{c,d}{1}=\frac{(c-a)_{n}}{(c)_{n}}\pFq{3}{2}{-n,a,d-b}{a-c+1-n,d}{1},
\]
see~\cite[Eq.~7.4.4.85]{pru-bry-mar_vol3}.
%~\cite[Cor.~3.3.4]{and-ask-ran_99} or~\cite[Eq.~7.4.4.85]{pru-bry-mar_vol3}. 
It follows that 
\begin{align*}
 \det\left(1-\frac{1}{4}L_{n}\right)&=\pFq{3}{2}{-n,1/2,1/2}{1,1}{1}=\frac{\left(1/2\right)_{n}}{n!}\pFq{3}{2}{-n,1/2, 1/2}{-n+1/2,1}{1}\\
 &=\frac{\left(1/2\right)_{n}}{n!}\sum_{k=0}^{n}\frac{n(n-1)\dots(n-k+1)}{(n-1/2)(n-3/2)\dots(n-k+1/2)}\frac{(1/2)_{k}^{2}}{(k!)^{2}}.
\end{align*}
Since all terms in the last sum are positive $4$ is not a root of the characteristic polynomial of $L_{n}$ for all $n\in\N$.
\end{proof}

\begin{rem}\label{rem:cdh_polyn}
It follows from \eqref{eq:char_pol_L} and \eqref{eq:cont_dual_hahn_ogp} that the reciprocal polynomial of the characteristic polynomial of $L_{n}$ is also readily related to a particular case of the continuous dual Hahn polynomials. Namely, we have
\[
 \det(1-zL_{n})=\frac{1}{(n!)^{2}}\,S_{n}\!\left(z-\frac{1}{4};\frac{1}{2},\frac{1}{2},\frac{1}{2}\right),
\]
for all $n\in\N$ and $z\in\C$. Moreover, since the continuous dual Hahn polynomials fulfill
\begin{align*}
S_{n+1}(x;a,b,c)&=\left((n+a+b)(n+a+c)+n(n+b+c-1)-a^{2}-x\right)S_{n}(x;a,b,c) \nonumber\\
&\hskip19pt -n(n+a+b-1)(n+a+c-1)(n+b+c-1)S_{n-1}(x;a,b,c),
\end{align*}
for $n\in\N$, see~\cite[Eq.~9.3.5]{koe-les-swa_10}, the characteristic polynomial of $L_{n}$ satisfies 
the recurrence relation
\[
 (n+1)^{2}\det(1-zL_{n+1})+\left(z-2n^{2}-2n-1\right)\det(1-zL_{n})+ n^{2}\det(1-zL_{n-1})=0,
\]
for $n\geq 2$.
\end{rem}

\section{Spectral density}\label{sec:number_evls_limit}

We will prove the limit formula from Theorem~\ref{thm:l-hilbert_limit} on the number of eigenvalues of $L_{n}(\nu)$ that exceed a fixed $x\in(0,4)$, as $n\to\infty$. As a preliminary step, we need to analyze an asymptotic behavior of $\det(1-zL_{n}(\nu))$, as $n\to\infty$, in the non-oscillatory regime $z\in\C\setminus\R$. Since 
$\det(1-zL_{n}(\nu))$ is a polynomial in $z$ with real coefficients we have, for all $z\in\C$, the symmetry
\[
 \overline{\det(1-zL_{n}(\nu))}=\det(1-\overline{z}L_{n}(\nu)),
\]
where the bar stands for the complex conjugation. Therefore we can restrict the analysis to the upper half-plane $\Im z>0$. Recall the square root assumes its principle branch.

\begin{prop}\label{prop:asympt_char_pol_L}
 For $\nu\in\R\setminus(-\N_{0})$, we have
\[
\det(1-\xi L_{n}(\nu))=\left(z+\frac{1}{2}\right)\Gamma(2z)\chi(z;\nu)\,n^{z-1/2}\left[1+O\left(\frac{1}{n^{\min(1,2\Re z)}}\right)\right],
\]
as $n\to\infty$, locally uniformly in the half-plane $\Im\xi>0$, where $z=-\ii\sqrt{\xi-1/4}$ and function~$\chi$ is defined by~\eqref{eq:def_chi}.
\end{prop}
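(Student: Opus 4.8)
The plan is to feed the exact formulas \eqref{eq:char_pol_L_n_P_n}--\eqref{eq:def_chi} into the determinant and extract the dominant power of $n$. Writing $\xi=\tfrac14-z^{2}$, so that $z=-\ii\sqrt{\xi-1/4}$, the hypothesis $\Im\xi>0$ pushes the principal square root into the open first quadrant and hence places $z$ in the open fourth quadrant; in particular $\Re z>0$, which is exactly the regime in which \eqref{eq:def_chi} is valid and in which $n^{z-1/2}$ is the larger of the two powers $n^{\pm z-1/2}$. Combining \eqref{eq:char_pol_L_n_P_n} with \eqref{eq:P_rel_chi_phi} I would write
\[
\det(1-\xi L_{n}(\nu))=\frac{\pi}{\sin(2\pi z)}\bigl[\chi(z;\nu)\,\psi_{n}(-z)-\chi(-z;\nu)\,\psi_{n}(z)\bigr],
\qquad
\psi_{n}(w):=u_{n}(w)-u_{n-1}(w),
\]
where $u_{n}(w):=(n+\nu)\phi_{n}(w;\nu)$, so that the whole problem reduces to the asymptotics of the first difference $\psi_{n}$.

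The core of the argument is the asymptotic analysis of $u_{n}(w)$. From \eqref{eq:def_phi}, the ${}_{3}F_{2}$ has a bottom parameter $n+\nu+w+3/2\to\infty$, so every term with $k\ge1$ is suppressed by a factor $n^{-k}$; together with the absolute convergence of the series (the parametric excess equals $n+\nu>0$) this yields a genuine expansion ${}_{3}F_{2}=1+O(1/n)$, with remainder $O(1/n^{2})$ after the first correction. Coupling this with the two-term ratio asymptotics $\Gamma(n+\nu+1)/\Gamma(n+\nu+w+3/2)=n^{-w-1/2}\bigl(1+O(1/n)\bigr)$ and $n+\nu=n(1+\nu/n)$, I obtain a genuine two-term expansion
\[
u_{n}(w)=\frac{1}{\Gamma(2w+1)}\Bigl(n^{1/2-w}+\beta_{w}\,n^{-1/2-w}+O\bigl(n^{-3/2-w}\bigr)\Bigr),
\]
with a coefficient $\beta_{w}$ whose precise value is irrelevant. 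Taking the first difference kills the leading power, $n^{1/2-w}-(n-1)^{1/2-w}=(1/2-w)n^{-1/2-w}+O(n^{-3/2-w})$, while the $\beta_{w}$-term and the remainder contribute only at order $n^{-3/2-w}$; hence
\[
\psi_{n}(w)=\frac{1/2-w}{\Gamma(2w+1)}\,n^{-1/2-w}\bigl(1+O(1/n)\bigr).
\]
This is the decisive point: although $u_{n}(w)$ itself grows like $n^{1/2-w}$, the difference is one order smaller, $n^{-1/2-w}$, and only the leading coefficient $1/\Gamma(2w+1)$---not the complicated $\beta_{w}$---survives into the leading term of $\psi_{n}$.

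It then remains to assemble the two contributions. Taking $w=-z$ gives the dominant term $\psi_{n}(-z)=\dfrac{z+1/2}{\Gamma(1-2z)}\,n^{z-1/2}\bigl(1+O(1/n)\bigr)$, and the reflection formula $\Gamma(2z)\Gamma(1-2z)=\pi/\sin(2\pi z)$ (legitimate since $\Re z>0,\ \Im z<0$ keep $2z$ off the integers) collapses the prefactor $\tfrac{\pi}{\sin(2\pi z)}\chi(z;\nu)\psi_{n}(-z)$ precisely to $(z+1/2)\Gamma(2z)\chi(z;\nu)\,n^{z-1/2}\bigl(1+O(1/n)\bigr)$. Taking $w=z$ shows the remaining term $\tfrac{\pi}{\sin(2\pi z)}\chi(-z;\nu)\psi_{n}(z)$ is $O(n^{-z-1/2})$, i.e.\ smaller by the relative factor $n^{-2\Re z}$. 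Merging the two error sources gives the factor $1+O(n^{-\min(1,2\Re z)})$, as claimed. All the estimates are uniform on compact subsets of the fourth-quadrant $z$-region (equivalently of $\{\Im\xi>0\}$), since the Gamma-ratio and hypergeometric expansions and the analyticity of $\chi(\cdot;\nu)$ are uniform there, which delivers the asserted local uniformity.

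I expect the main obstacle to be precisely the leading-order cancellation between $(n+\nu)P_{n}$ and $(n+\nu-1)P_{n-1}$: both are of size $n^{z+1/2}$ whereas the determinant is of size $n^{z-1/2}$, so a careless one-term estimate $u_{n}=A\,n^{1/2-w}(1+O(1/n))$ would leak the $O(1/n)$ correction---after multiplication by $n+\nu\sim n$---back into the leading term. Organizing the computation as the first difference $u_{n}-u_{n-1}$ of a sequence possessing a true two-term asymptotic expansion is what makes this cancellation transparent and confines all the unknown subleading constants to the error.
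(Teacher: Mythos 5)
Your proposal is correct and follows essentially the same route as the paper's proof: both combine \eqref{eq:char_pol_L_n_P_n} with \eqref{eq:P_rel_chi_phi}, extract a genuine two-term expansion of $\phi_{n}$ (Gamma-ratio asymptotics plus the ${}_{3}F_{2}$ expansion in the large lower parameter), exploit the first-difference cancellation that drops the order from $n^{z+1/2}$ to $n^{z-1/2}$, apply the reflection formula, and use $\Re z>0$ for $\Im\xi>0$ to identify the dominant branch and the error $O(n^{-\min(1,2\Re z)})$. The only difference is organizational---you form the difference $u_{n}-u_{n-1}$ before attaching the $\chi$-factors, whereas the paper expands $P_{n}$ to two terms first---but the underlying computation and all key ingredients coincide.
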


\begin{proof}
 We make use of expression~\eqref{eq:P_rel_chi_phi} to deduce an asymptotic expansion of $P_{n}$ first, and then apply~\eqref{eq:char_pol_L_n_P_n}. It follows from the right-hand side of~\eqref{eq:char_pol_L_n_P_n} that we need the first two terms from the asymptotic expansion of $P_{n}$ in order to deduce the leading term in the asymptotic formula for $\det(1-zL_{n}(\nu))$, for $n\to\infty$.
 
The computation of an asymptotic expansion of function~\eqref{eq:def_phi} is a straightforward application of the Stirling formula and the very definition of a hypergeometric series. It results in expansions
\[
 \frac{\Gamma(n+\nu+1)}{\Gamma(n+\nu+z+3/2)}=n^{-1/2-z}\left[1+O\left(\frac{1}{n}\right)\right]
\]
and
\[
 \pFq{3}{2}{1/2+z, 1/2+z, 3/2+z}{1+2z,n+\nu+z+3/2}{1}=1+O\left(\frac{1}{n}\right),
\]
for $n\to\infty$, that are local uniform in $z\in\C$. Thus
\[
 \phi_{n}(z)=\frac{n^{-1/2-z}}{\Gamma(1+2z)}\left[1+O\left(\frac{1}{n}\right)\right]
\]
and, by~\eqref{eq:P_rel_chi_phi} and the reflection formula for the Gamma function
\[
 \Gamma(z)\Gamma(1-z)=\frac{\pi}{\sin(\pi z)},
\]
we obtain
\begin{equation}
 P_{n}\left(\frac{1}{4}-z^{2};\nu\right)=\Gamma(2z)\chi(z;\nu)\,n^{z-1/2}\left[1+O\left(\frac{1}{n}\right)\right]+[z\mapsto-z],
\label{eq:P_n_asympt}
\end{equation}
for $n\to\infty$, locally uniformly in $z\in\C\setminus(\Z/2)$. Here and below, abbreviation $[z\to-z]$ stands for the term which equals the previous displayed term with $z$ replaced by $-z$. Applying~\eqref{eq:P_n_asympt} in~\eqref{eq:char_pol_L_n_P_n} yields
\[
 \det\left(1-\left(\frac{1}{4}-z^{2}\right)\!L_{n}(\nu)\right)=\left(z+\frac{1}{2}\right)\Gamma(2z)\chi(z;\nu)\,n^{z-1/2}\left[1+O\left(\frac{1}{n}\right)\right]+[z\mapsto-z],
\]
for $n\to\infty$, locally uniformly in $z\in\C\setminus(\Z/2)$. Finally, it suffices to write $z=-\ii\sqrt{\xi-1/4}$ in the last formula and realize that, if $\Im\xi>0$, then $\Re z=\Im\sqrt{\xi-1/4}>0$ for the standard branch of the square root. It means that, for $\Im \xi>0$, the leading term of the asymptotic expansion is the one of order $n^{z-1/2}$. The next term is either of order $n^{z-3/2}$ or $n^{-z-1/2}$ depending on the value of $\Re z$. Hence, after factoring out the term $n^{z-1/2}$, the remainder decays at least as $n^{-\min(1,2\Re z)}$, for $\Re z>0$.
\end{proof}

An immediate consequence of Proposition~\ref{prop:asympt_char_pol_L} is the following ratio asymptotic formula. Recall that, if a sequence of analytic functions $f_{n}$ converges locally uniformly to an analytic function $f$ on an open set $U\subset\C$, for $n\to\infty$, then $f_{n}'$ converges locally uniformly to $f'$ on $U$, for $n\to\infty$; see~\cite[Chp.~VII, Thm.~2.1]{con_78}.

\begin{cor}
For $\nu\in\R\setminus(-\N_{0})$ and $\xi\in\C\setminus\R$, we have
\begin{equation}
 \lim_{n\to\infty}\frac{1}{\log n}\frac{\partial_{\xi}\det(1-\xi L_{n}(\nu))}{\det(1-\xi L_{n}(\nu))}=\frac{\sign(\Im\xi)}{2\ii\sqrt{\xi-1/4}}.
\label{eq:ration_asympt}
\end{equation}
\end{cor}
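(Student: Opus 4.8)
The plan is to read off the logarithmic derivative directly from the asymptotic factorisation established in Proposition~\ref{prop:asympt_char_pol_L} and to argue that, after dividing by $\log n$, only the contribution of the power $n^{z-1/2}$ survives. First I would dispose of the lower half-plane by symmetry: since $\det(1-\xi L_{n}(\nu))$ is a polynomial in $\xi$ with real coefficients, so is its $\xi$-derivative, whence
\[
 \frac{\partial_{\xi}\det(1-\xi L_{n}(\nu))}{\det(1-\xi L_{n}(\nu))}\bigg|_{\xi=\bar{\eta}}=\overline{\frac{\partial_{\eta}\det(1-\eta L_{n}(\nu))}{\det(1-\eta L_{n}(\nu))}}.
\]
Taking $\eta$ in the upper half-plane and using $\overline{\sqrt{\eta-1/4}}=\sqrt{\bar\eta-1/4}$ for the principal branch together with $\overline{2\ii}=-2\ii$, the formula for $\Im\xi<0$ follows from the one for $\Im\xi>0$, the conjugation precisely producing the factor $\sign(\Im\xi)$. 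Thus it suffices to treat $\Im\xi>0$, where $z=-\ii\sqrt{\xi-1/4}$ is holomorphic and $\Re z>0$.

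For $\Im\xi>0$, I would write the conclusion of Proposition~\ref{prop:asympt_char_pol_L} as
\[
 \det(1-\xi L_{n}(\nu))=g(\xi)\,n^{z-1/2}R_{n}(\xi),\qquad g(\xi):=\left(z+\tfrac{1}{2}\right)\Gamma(2z)\chi(z;\nu),
\]
where $R_{n}\to1$ locally uniformly on $\{\Im\xi>0\}$. All three factors are holomorphic in $\xi$ there (the square root avoids its cut, $\Gamma(2z)$ is holomorphic for $\Re z>0$, and $\chi(\,\cdot\,;\nu)$ is entire), so for $n$ large one may take logarithms and differentiate to get
\[
 \frac{\partial_{\xi}\det(1-\xi L_{n}(\nu))}{\det(1-\xi L_{n}(\nu))}=\frac{g'(\xi)}{g(\xi)}+(\partial_{\xi}z)\log n+\frac{\partial_{\xi}R_{n}(\xi)}{R_{n}(\xi)}.
\]
Dividing by $\log n$, the first term tends to $0$ because $g'/g$ does not depend on $n$, and the middle term contributes $\partial_{\xi}z=-\ii/(2\sqrt{\xi-1/4})=1/(2\ii\sqrt{\xi-1/4})$, which is exactly the claimed limit.

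The one step that needs genuine justification, and which I expect to be the main obstacle, is that $\partial_{\xi}R_{n}/R_{n}\to0$, i.e. that the error term in Proposition~\ref{prop:asympt_char_pol_L} may be differentiated. This I would settle by the Weierstrass convergence theorem: $R_{n}$ is a sequence of holomorphic functions converging to the constant $1$ locally uniformly, hence $\partial_{\xi}R_{n}\to0$ locally uniformly by Cauchy's estimates, and since $R_{n}\to1\neq0$ the quotient $\partial_{\xi}R_{n}/R_{n}$ tends to $0$ locally uniformly as well; in particular it is bounded, so its quotient by $\log n$ vanishes in the limit. The only caveat is that $g$ may vanish at the isolated points where $\chi(z(\xi);\nu)=0$, on which the factorisation degenerates; these form a discrete subset of the half-plane, so the argument above applies locally near every other point of $\C\setminus\R$ and gives the stated limit there, while the discrete exceptional set is immaterial for the subsequent application of the corollary, where the logarithmic derivative is integrated along contours.
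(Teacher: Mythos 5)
Your proposal is correct and follows exactly the route the paper intends: the paper states the corollary as an ``immediate consequence'' of Proposition~\ref{prop:asympt_char_pol_L}, meaning precisely the logarithmic differentiation of the factorisation $g(\xi)\,n^{z-1/2}[1+o(1)]$ in the upper half-plane, with the lower half-plane handled by the real-coefficients conjugation symmetry already set up before the proposition. Your added justifications (Weierstrass/Cauchy estimates to differentiate the error term, and the remark about possible zeros of $\chi$ --- which in fact cannot occur for $\Im\xi>0$, since $L_{n}(\nu)$ is real symmetric so $\det(1-\xi L_{n}(\nu))\neq0$ off the real axis) simply make explicit what the paper leaves tacit.
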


Note the limit function in~\eqref{eq:ration_asympt} is analytic in $\C\setminus[1/4,\infty)$ and has a branch cut in $[1/4,\infty)$. This cut determines the spectral distribution of Theorem~\ref{thm:l-hilbert_limit}.

\begin{proof}[Proof of Theorem~\ref{thm:l-hilbert_limit}]
Let $\nu\in\R\setminus(-\N_{0})$ and $x\in(0,4)$ be fixed. Clearly, the number of eigenvalues of $L_{n}(\nu)$ located in $(x,4)$ coincides with the number of roots of polynomial $\xi\mapsto\det(1-\xi L_{n}(\nu))$ in $(1/4,1/x)$. By the Argument Principle, we have
 \[
  \#\{\mu\in\sigma(L_{n}(\nu)) \mid x<\mu<4\}=\frac{1}{2\pi\ii}\oint_{\gamma_{x}}\frac{\partial_{\xi}\det(1-\xi L_{n}(\nu))}{\det(1-\xi L_{n}(\nu))}\,\dd\xi,
 \]
 where $\gamma_{x}$ is a simple closed counter-clockwise oriented curve crossing the real line at the points $1/4$ and $1/x$. Then, using~\eqref{eq:ration_asympt}, one deduces 
 \[
 \lim_{n\to\infty}\frac{\#\{\mu\in\sigma(L_{n}(\nu)) \mid x<\mu<4\}}{\log n}=-\frac{1}{4\pi}\oint_{\gamma_{x}}\frac{\sign(\Im z)}{\sqrt{z-1/4}}\,\dd z.
 \]
 Lastly, by squeezing the integration curve towards the real line and taking into account the orientation, we conclude
 \[
 \lim_{n\to\infty}\frac{\#\{\mu\in\sigma(L_{n}(\nu)) \mid x<\mu<4\}}{\log n}=\frac{1}{2\pi}\int_{1/4}^{1/x}\frac{\dd y}{\sqrt{y-1/4}}=\frac{1}{\pi}\sqrt{\frac{1}{x}-\frac{1}{4}},
 \]
 which implies Theorem~\ref{thm:l-hilbert_limit}.
\end{proof}

\begin{rem}
 As kindly pointed out by one of the anonymous reviewers, one can alternatively prove Theorem~\ref{thm:l-hilbert_limit} by applying~\cite[Thm.~2.1]{pus_preprint} to the symbol $\varphi(t)=4/(1+4t^{2})$, which yields the result for $\nu=1$, and then generalize to all $\nu\in\R\setminus(-\N_{0})$ by using the fact that the Hilbert--Schmidt norm of $L_{n}(\nu)-L_{n}(1)$ is uniformly bounded. 
\end{rem}

\section{Small eigenvalues}\label{sec:small_evls}

In this section, we derive the asymptotic formula of Theorem~\ref{thm:l-hilbert_small_evls} for small eigenvalues of $L_{n}(\nu)$, for $\nu>0$, as $n\to\infty$. It turns out that the desired result can be deduced with the aid of a general asymptotic formula for large roots of polynomials generated by a special second order recurrence. We recall the respective theorem which is due to A.~Maté, P.~Nevai, and V. Totik~\cite{mat-nev-tot_jlms86}; see also~\cite[Thm.~6.1]{che-ism_jcam97}.

\begin{thm}[Maté, Nevai, Totik]\label{thm:mate-nevai-totik}
 Let $\{\gamma_{n}\}_{n=1}^{\infty}$ be a positive sequence such that
 \[
  \gamma_{n}=c^{2}n^{2\delta}\left(1+o\left(n^{-2/3}\right)\right)\!, \quad n\to\infty,
 \]
 where $c,\delta>0$. Then zeros $x_{1,n}<x_{2,n}<\dots<x_{n,n}$ of polynomial $Q_{n}$ defined recursively by
 \begin{equation}
   Q_{0}(x)=1, \quad Q_{1}(x)=x, \quad\mbox{ and }\quad Q_{n+1}(x)=xQ_{n}(x)-\gamma_{n}Q_{n-1}(x), \quad n\in\N,
 \label{eq:Q_n_recur}
 \end{equation}
 fulfill 
 \[
  x_{n-j+1,n}=2cn^{\delta}\left[1-6^{-1/3}\delta^{2/3}i_{j}\,n^{-2/3}+o\left(n^{-2/3}\right)\right], \quad n\to\infty,
 \]
 for $j\in\N$ fixed, where $i_{1}<i_{2}<\dots$ are positive zeros of the Airy function
 \begin{equation}
 A(x):=\frac{\pi}{3}\sum_{m=0}^{\infty}\left(\frac{1}{\Gamma(m+2/3)}+\frac{x}{3\Gamma(m+4/3)}\right)\frac{(-x/3)^{3m}}{m!},
 \label{eq:airy}
 \end{equation}
 (see~\cite[\S~1.81 and \S~6.32]{sze_39}).
\end{thm}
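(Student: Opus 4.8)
The plan is to attack the recurrence \eqref{eq:Q_n_recur} directly by a discrete Liouville--Green (WKB) analysis, with an Airy turning-point computation supplying the fine structure at the top of the spectrum. The monic polynomials $Q_n$ are orthogonal, so their zeros $x_{1,n}<\dots<x_{n,n}$ are exactly the eigenvalues of the symmetric tridiagonal matrix with zero diagonal and off-diagonal entries $\sqrt{\gamma_1},\dots,\sqrt{\gamma_{n-1}}$. I would therefore work with the eigenvector recurrence $\sqrt{\gamma_{k-1}}\,u_{k-1}+\sqrt{\gamma_k}\,u_{k+1}=x u_k$, where $Q_n(x)=0$ corresponds to the Dirichlet-type conditions $u_0=0$ and $u_{n+1}=0$. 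Since $\sqrt{\gamma_k}\sim ck^\delta\to\infty$, the relevant scale for the largest zeros is $x\asymp 2cn^\delta$, and the whole analysis takes place in a thin window around this edge.

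\emph{Locating the turning point.} Freezing the slowly varying coefficient $\sqrt{\gamma_k}\approx ck^\delta$, the local characteristic equation $\lambda^2-x\lambda+\gamma_k=0$ is oscillatory precisely when $x<2ck^\delta$, i.e. for $k>k_*$, where the turning index is $k_*=k_*(x)=(x/2c)^{1/\delta}$. The largest zeros are the values of $x$ for which $k_*(x)$ sits just below $n$, so I set $x=2cn^\delta(1-s)$ with $s$ small and record $k_*=n(1-s)^{1/\delta}=n\bigl(1-s/\delta+O(s^2)\bigr)$; the oscillatory window $(k_*,n]$ then has width $n-k_*\approx ns/\delta$, and hosting $O(1)$ oscillations forces $s\asymp n^{-2/3}$.

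\emph{Airy reduction and the constant.} In the continuum approximation the eigenvector equation becomes $u''(k)=\bigl(x/(ck^\delta)-2\bigr)u(k)$, whose potential vanishes at $k_*$ with slope $-2\delta/k_*$; linearizing gives $u''=-\tfrac{2\delta}{k_*}(k-k_*)u$. Rescaling $k-k_*=\sigma\zeta$ with $\sigma=(k_*/2\delta)^{1/3}$ turns this into the Airy equation $u_{\zeta\zeta}=-\zeta u$, solved by $\mathrm{Ai}(-\zeta)$. The lower condition $u_0=0$ forces the solution entering the turning-point region to be (exponentially close to) the recessive one, which decays on the forbidden side $k<k_*$; the upper condition $Q_n(x)=0$ then quantizes as $\mathrm{Ai}(-\zeta_n)=0$ with $\zeta_n=(n-k_*)/\sigma$, i.e. $\zeta_n=|a_j|$, where $a_j<0$ are the ordinary Airy zeros and $j$ counts from the top. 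A key reconciliation step is the normalization: a term-by-term check of \eqref{eq:airy} shows the function $A$ satisfies $A''(x)+\tfrac{x}{3}A(x)=0$, hence $A(x)\propto\mathrm{Ai}(-3^{-1/3}x)$ and its positive zeros obey $i_j=3^{1/3}|a_j|$. Substituting $|a_j|=3^{-1/3}i_j$ yields $n-k_*=(k_*/6\delta)^{1/3}i_j\,(1+o(1))$, and then $x=2ck_*^\delta=2cn^\delta\bigl[1-\delta(6\delta)^{-1/3}i_j n^{-2/3}+o(n^{-2/3})\bigr]$, in which $\delta(6\delta)^{-1/3}=6^{-1/3}\delta^{2/3}$ reproduces the claimed constant. (Omitting this normalization step would give $2^{-1/3}\delta^{2/3}$, off by a factor $3^{1/3}$.)

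The hard part will be making the discrete WKB and the turning-point matching rigorous to relative precision $o(n^{-2/3})$, exactly the accuracy needed to pin down the correction term. Three issues demand care: (i) replacing second differences by $u''$ and freezing the coefficient introduce corrections that must be shown not to contaminate the $n^{-2/3}$ term, cleanest via uniform Liouville--Green/Olver asymptotics carried through the turning point, or via comparison with a genuine Airy equation using a sufficiently accurate interpolation of $u_k$; (ii) the hypothesis $\gamma_n=c^2n^{2\delta}\bigl(1+o(n^{-2/3})\bigr)$ is used precisely here, since a perturbation argument must show that the $o(n^{-2/3})$ relative deviation of $\gamma_k$ from the pure power law perturbs each scaled zero only by $o(n^{-2/3})$, so that the leading power law alone governs the correction; and (iii) one must justify the recessive-solution selection and the counting that matches the $j$-th zero from the top with $i_j$. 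A convenient sanity check throughout is the exactly solvable case $\delta=1/2$, $c=1$, where $Q_n$ are the (probabilist) Hermite polynomials and the statement collapses to the classical Plancherel--Rotach edge asymptotics.
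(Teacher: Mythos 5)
This theorem is not proved in the paper at all: it is quoted as a known result of M\'at\'e, Nevai and Totik \cite{mat-nev-tot_jlms86} (see also \cite[Thm.~6.1]{che-ism_jcam97}) and used as a black box in the proof of Theorem~\ref{thm:l-hilbert_small_evls}, so there is no in-paper argument to compare yours with; the comparison must be with the original literature. On its own terms, your turning-point heuristic is structurally sound and, importantly, gets every constant right: the turning index $k_*=(x/2c)^{1/\delta}$, the slope $-2\delta/k_*$, the Airy scale $\sigma=(k_*/2\delta)^{1/3}$, and above all the normalization of Szeg\H{o}'s Airy function --- the function \eqref{eq:airy} indeed solves $A''+xA/3=0$, so $A(x)\propto\mathrm{Ai}(-3^{-1/3}x)$ and $i_j=3^{1/3}|a_j|$, which is exactly what converts your raw coefficient $\delta(2\delta)^{-1/3}|a_j|$ into the stated $6^{-1/3}\delta^{2/3}i_j$. (Incidentally, your identification of the zeros of $Q_n$ with the eigenvalues of the Jacobi matrix with zero diagonal and off-diagonal entries $\sqrt{\gamma_k}$ tacitly corrects a typo in the statement: in \eqref{eq:Q_n_recur} one should have $Q_0(x)=1$, since with $Q_0=0$ the recurrence gives $Q_2(x)=x^2$, contradicting the simplicity of the zeros.)

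As a proof, however, the proposal stops exactly where the theorem begins: your items (i)--(iii) are the entire analytic content of the result, and at least one of them would fail if done the obvious way. The hypothesis $\gamma_n=c^2n^{2\delta}\left(1+o(n^{-2/3})\right)$ constrains the coefficients only as $n\to\infty$; for small and moderate $k$ the deviation of $\sqrt{\gamma_k}$ from $ck^{\delta}$ need not be small, so a global norm (Weyl) perturbation bound on the Jacobi matrix gives an eigenvalue error that is \emph{not} $o(n^{\delta-2/3})$ when $\delta\leq 2/3$; your step (ii) therefore requires first establishing localization of the edge eigenvectors in the window $k\approx n$ (exponential smallness through the forbidden region $k<k_*$) before any perturbation estimate at the needed scale becomes available. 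Likewise, in (i) the error budget must actually be audited: replacing the second difference by $u''$, and the asymmetry $\sqrt{\gamma_{k-1}}\neq\sqrt{\gamma_k}$, produce corrections of relative size $\sigma^{-2}\asymp n^{-2/3}$ in the Airy window --- the very order of the term being computed --- and only a closer look shows they displace the quantization point by $O(n^{-2/3})$ in the Airy variable, hence $x$ by a harmless $O(n^{\delta-4/3})$; certifying this uniformly through the turning point needs an Olver-type uniform asymptotic theory for the difference equation or a comparison-equation argument, and (iii) still needs a Sturm-type oscillation count to pin the $j$-th largest eigenvalue to the $j$-th Airy zero. In short: right route, right constants (including the subtle $3^{1/3}$ from Szeg\H{o}'s normalization), but the hard analysis is deferred rather than supplied, so this is a good plan for reproving the M\'at\'e--Nevai--Totik theorem, not yet a proof of it.
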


\begin{rem}
To be precise, the original theorem of A.~Maté, P.~Nevai, and V. Totik from~\cite{mat-nev-tot_jlms86} is formulated in terms of the orthonormal polynomials $p_{n}$ rather than the monic orthogonal polynomials $Q_{n}$ which are used in Theorem~\ref{thm:mate-nevai-totik}. The two families are related by the formula
\[
 Q_{n}(x)=\left(\prod_{j=1}^{n}\gamma_{j}^{1/2}\right)p_{n}(x), \quad n\in\N_{0},
\]
and hence have the same roots. Moreover, if we denote $\alpha_{n}:=\sqrt{\gamma_{n}}$, it follows from~\eqref{eq:Q_n_recur} that polynomials $p_{n}$ fulfill the recurrence
\[
 xp_{n}(x)=\alpha_{n+1}p_{n+1}(x)+\alpha_{n}p_{n-1}(x), \quad n\in\N,
\]
which appears in~\cite{mat-nev-tot_jlms86}.
\end{rem}

Given a sequence $\{b_{n}\}_{n=0}^{\infty}$, consider the family of polynomials defined recursively by equations
\[
 Q_{0}(x)=1, \quad Q_{1}(x)=x, \quad\mbox{ and }\quad Q_{n+1}(x)=xQ_{n}(x)-b_{\left\lfloor(n-1)/2\right\rfloor}Q_{n-1}(x), \quad n\in\N.
\]
Here and below, $\lfloor x\rfloor$ is the greatest integer less than or equal to $x\in\R$. Then one readily checks that
\begin{equation}
 Q_{2n+2}(x)=(x^{2}-b_{n-1}-b_{n})Q_{2n}(x)-b_{n-1}^{2}Q_{2n-2}(x),
\label{eq:Q_2n_recur}
\end{equation}
which implies the identity
\begin{equation}
 Q_{2n}(x)=\det(x^{2}-B_{n}),
\label{eq:Q_n_rel_B_n}
\end{equation}
for all $n\in\N$, where Jacobi matrix $B_{n}$ is defined by~\eqref{eq:matrix_B}.

\begin{proof}[Proof of Theorem~\ref{thm:l-hilbert_small_evls}]
 Let us denote by $B_{n}(\nu)$ the Jacobi matrix given by~\eqref{eq:matrix_B} with $b_{n}=(n+\nu)(n+1+\nu)$ and the eigenvalues of $B_{n}(\nu)$ by $\beta_{1,n}(\nu)<\beta_{2,n}(\nu)<\dots<\beta_{n,n}(\nu)$. First, we relate eigenvalues of $B_{n}(\nu)$ and $L_{n}(\nu)$. It follows from~\eqref{eq:A_rel_B_rank-one} that $B_{n}(\nu)$ is a rank-one perturbation of $L_{n}^{-1}(\nu)$, namely
 \[
  B_{n}(\nu)=L_{n}^{-1}(\nu)+(n+\nu-1)^{2}e_{n}e_{n}^{T}.
 \]
 Therefore $B_{n}(\nu)\geq L_{n}^{-1}(\nu)$, which implies
\begin{equation}
 \beta_{j,n}(\nu)\geq\frac{1}{\mu_{n-j+1,n}(\nu)},
 \label{eq:ineq_from_weyl}
\end{equation}
for all $j\in\{1,2,\dots,n\}$, by Weyl's inequality. Further, since $B_{n-1}(\nu)$ is the $(n-1)\times (n-1)$ section of $L_{n}^{-1}(\nu)$, we also know that
\begin{equation}
 \frac{1}{\mu_{n,n}(\nu)}\leq\beta_{1,n-1}(\nu)\leq \frac{1}{\mu_{n-1,n}(\nu)}\leq\dots\leq \frac{1}{\mu_{2,n}(\nu)}\leq\beta_{n-1,n-1}(\nu)\leq\frac{1}{\mu_{1,n}(\nu)} 
\label{eq:ineq_from_cauchy}
\end{equation}
by the Cauchy interlacing theorem for eigenvalues of Hermitian matrices. Combining~\eqref{eq:ineq_from_weyl} and~\eqref{eq:ineq_from_cauchy}, we get
\begin{equation}
\frac{1}{\beta_{n-j+1,n}(\nu)}\leq\mu_{j,n}(\nu)\leq\frac{1}{\beta_{n-j,n-1}(\nu)},
\label{eq:mu_beta_bounds}
\end{equation}
for $j\in\{1,2,\dots,n-1\}$.

In view of~\eqref{eq:Q_n_rel_B_n} and~\eqref{eq:Q_2n_recur}, one checks the polynomials
\[
 Q_{2n}(x):=\det(x^{2}-B_{n}(\nu))
\]
coincide with the even index polynomials generated by recurrence~\eqref{eq:Q_n_recur} with
\[
\gamma_{n}:=b_{\left\lfloor(n-1)/2\right\rfloor}=\left(\left\lfloor\frac{n-1}{2}\right\rfloor+\nu\right)\left(\left\lfloor\frac{n+1}{2}\right\rfloor+\nu\right).
\]
Since
\[
 \gamma_{n}=\frac{n^{2}}{4}\left(1+O\left(\frac{1}{n}\right)\right)\!,\quad n\to\infty,
\]
i.e., $c=1/2$ and $\delta=1$, Theorem~\ref{thm:mate-nevai-totik} tells us that 
\[
 \beta_{n-j+1,n}(\nu)=x_{2n-j+1,2n}^{2}=4n^{2}\left[1-\frac{i_{j}}{\sqrt[3]{3}}\,n^{-2/3}+o\left(n^{-2/3}\right)\right], \quad n\to\infty,
\]
provided that $\nu>0$, which assures $\gamma_{n}>0$ for $n\in\N$.
Notice that the same asymptotic formula holds true for $\beta_{n-j,n-1}(\nu)$. Consequently, with the aid of~\eqref{eq:mu_beta_bounds}, we arrive at the asymptotic formula from Theorem~\ref{thm:l-hilbert_small_evls}.
\end{proof}

\section{Large eigenvalues}\label{sec:large_evls}

Throughout this section, we consider only the case $\nu=1$. Our final goal is an asymptotic analysis of large eigenvalues of $L_{n}$ for $n\to\infty$.  To do so, we first investigate an asymptotic behavior of the characteristic polynomial of $L_{n}$ in the oscillatory region, more precisely, in a left neighborhood of $4$; see Theorem~\ref{thm:spec_L_n_nu=1}. We substitute for 
\[
z=\frac{1}{4}+\xi^{2}
\]
in~\eqref{eq:char_pol_L} and study an asymptotic behavior of the auxiliary polynomial 
\begin{equation}
 q_{n}(\xi):=\xi\det\!\left(1-\left(\frac{1}{4}+\xi^{2}\right)\!L_{n}\right)=\xi\,\pFq{3}{2}{-n,1/2+\ii\xi,1/2-\ii\xi}{1,1}{1}.
\label{eq:def_q_n}
\end{equation}
Since we aim at an asymptotic analysis of the roots of $q_{n}$ a special care has to be taken for the remainder in the asymptotic expansion of~$q_{n}$. We prove the following formula.

\begin{prop}\label{prop:q_n_asympt}
 Let $K>0$, $0<L<1/2$, and $\mathcal{S}_{K,L}:=\{\xi\in\C\mid |\Re\xi|\leq K \;\wedge\;|\Im\xi|\leq L\}$.  Then there exists a constant $C_{K,L}>0$ such that, for all $\xi\in\mathcal{S}_{K,L}$ and $n\in\N$, one has
 \begin{align*}
 q_{n}(\xi)=\frac{1}{\sqrt{n}}\left[\frac{F(\xi)-F(-\xi)}{2\ii}\,\cos(\xi\log n)+\frac{F(\xi)+F(-\xi)}{2}\,\sin(\xi\log n)\right]+R_{n}(\xi),
 \end{align*}
 where 
 \[
  \left|R_{n}(\xi)\right|\leq\frac{C_{K,L}}{n}
 \]
 and
 \begin{equation}
 F(\xi):=\frac{\Gamma(1+2\ii\xi)}{\Gamma^{3}(1/2+\ii\xi)}.
 \label{eq:def_func_F}
 \end{equation}
\end{prop}

\begin{rem}
 Since 
 \[
  q_{n}(\xi)=\frac{\xi}{(n!)^{2}}\,S_{n}\!\left(\xi^{2};\frac{1}{2},\frac{1}{2},\frac{1}{2}\right),
 \]
 see Remark~\ref{rem:cdh_polyn}, one may hope to find asymptotic formulas for small zeros of the continuous dual Hahn polynomials in the extensive literature on orthogonal polynomials. Although asymptotic formulas for extreme zeros are available for various families of orthogonal polynomials, see for instance~\cite{ism_05} and references therein, this seems to be not the case for the continuous dual Hahn polynomials to author's best knowledge.
\end{rem}

The proof of Proposition~\ref{prop:q_n_asympt} essentially uses ideas of the Darboux method, see \cite[Sec.~8.9]{olv_97}. However, we need to keep track on the dependence on $\xi$ in the error term 
which requires a~more detailed analysis of the remainder.

A starting point of the method is the generating function formula
\begin{equation}
f_{\xi}(t):=\xi\,(1-t)^{-1/2+\ii\xi}\,\pFq{2}{1}{1/2+\ii\xi, 1/2+\ii\xi}{1}{t}=\sum_{n=0}^{\infty}q_{n}(\xi)t^{n},
\label{eq:gener_func_q_n}
\end{equation}
for $|t|<1$, which is a particular case of~\cite[Eq.~9.3.12]{koe-les-swa_10}. For $|\Im\xi|<1/2$, the Gauss hypergeometric function in~\eqref{eq:gener_func_q_n} is an analytic function in the open unit disk and continuous to the boundary unit circle except possibly the point $t=1$, where it may have a singularity, see~\cite[Chap.~15]{dlmf}. If $\Im\xi>-1/2$, the point $t=1$ is a singular point of function~\eqref{eq:gener_func_q_n} that is located most closely to the origin and determines the asymptotic behavior of $q_{n}$, for $n$ large. A behavior of function~\eqref{eq:gener_func_q_n} in a~neighborhood of the singular point $t=1$ reveals the identity
\begin{equation}
f_{\xi}(t)=(1-t)^{-1/2+\ii\xi}\,\frac{\ii\Gamma(1-2\ii\xi)}{2\Gamma^{2}(1/2-\ii\xi)}\,\pFq{2}{1}{1/2+\ii\xi, 1/2+\ii\xi}{1+2\ii\xi}{1-t}-[\xi\mapsto-\xi],
\label{eq:f_transf1}
\end{equation}
which holds if $|\arg t|<\pi$, $|\arg(1-t)|<\pi$, and is a consequence of transformation~\cite[Eq.~15.10.21]{dlmf} and the identity $\Gamma(1\pm2\ii\xi)=\pm2\ii\xi\Gamma(\pm2\ii\xi)$.
The leading term in the asymptotic expansion of~\eqref{eq:f_transf1}, as $t\to1$ from the unit disk, provides us with the comparison function
\begin{equation}
g_{\xi}(t):=(1-t)^{-1/2+\ii\xi}\,\frac{\ii\Gamma(1-2\ii\xi)}{2\Gamma^{2}(1/2-\ii\xi)}-(1-t)^{-1/2-\ii\xi}\,\frac{\ii\Gamma(1+2\ii\xi)}{2\Gamma^{2}(1/2+\ii\xi)}
\label{eq:def_g_xi}
\end{equation}
to the Darboux method.

Difference $f_{\xi}-g_{\xi}$ is an analytic function in the unit disk and continuous to the boundary, if  $|\Im\xi|<1/2$. Hence, by Cauchy's integral formula, we have
\begin{equation}
 q_{n}(\xi)-c_{n}(\xi)=\frac{1}{2\pi}\int_{-\pi}^{\pi}e^{-\ii n\theta}\left(f_{\xi}(e^{\ii\theta})-g_{\xi}(e^{\ii\theta})\right)\dd\theta,
 \label{eq:diff_cauchy_int_formula}
\end{equation}
where $c_{n}(\xi)$ are coefficients from the Maclaurin series
\[
 g_{\xi}(t)=\sum_{n=0}^{\infty}c_{n}(\xi)t^{n},
\]
for $|t|<1$. Since $f_{\xi}'-g_{\xi}'=O\!\left((1-t)^{-1/2-\Im\xi}\right)$, as $t\to1$, it is still integrable on the unit circle provided that $|\Im\xi|<1/2$. Therefore we can integrate by parts in~\eqref{eq:diff_cauchy_int_formula}, which yields
\begin{equation}
 q_{n}(\xi)-c_{n}(\xi)=\frac{1}{2\pi n}\int_{-\pi}^{\pi}e^{-\ii(n-1)\theta}\left(f_{\xi}'(e^{\ii\theta})-g_{\xi}'(e^{\ii\theta})\right)\dd\theta.
 \label{eq:diff_q_n_c_n_integral}
\end{equation}
It turns out that the integral in \eqref{eq:diff_q_n_c_n_integral} is uniformly bounded in $\mathcal{S}_{K,L}$ with $K>0$ and $0<L<1/2$ fixed. It is the statement of the following lemma whose proof is straightforward but technical and therefore it is postponed to Appendix~\ref{subsec:a.1}.

\begin{lem}\label{lem:remainder_estim_aux}
 For $K>0$ and $0<L<1/2$, there is $C_{K,L}'>0$ such that 
 \[
  \left|\int_{-\pi}^{\pi}e^{-\ii(n-1)\theta}\left(f_{\xi}'(e^{\ii\theta})-g_{\xi}'(e^{\ii\theta})\right)\dd\theta\right|\leq C_{K,L}',
 \]
 for all $\xi\in\mathcal{S}_{K,L}$ and $n\in\N$.
\end{lem}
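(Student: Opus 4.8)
Since $|e^{-\ii(n-1)\theta}|=1$, the asserted bound is equivalent to the $n$-independent estimate $\int_{0}^{\pi}|f_{\xi}'(e^{\ii\theta})-g_{\xi}'(e^{\ii\theta})|\,\dd\theta\leq C_{K}'/\xi$, and this is what I would establish. The whole $1/\xi$ growth is carried by the coefficient $A_{\xi}:=\Gamma(-2\ii\xi)/\Gamma^{2}(1/2-\ii\xi)$ appearing in~\eqref{eq:def_g_xi}: from $|\Gamma(2\ii\xi)|^{2}=\pi/(2\xi\sinh(2\pi\xi))$ and $|\Gamma(1/2+\ii\xi)|^{2}=\pi/\cosh(\pi\xi)$ one gets $\xi|A_{\pm\xi}|\to 1/(2\pi)$ as $\xi\to0+$, hence $|A_{\pm\xi}|\leq C_{K}/\xi$ for $\xi\in(0,K]$. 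The only singularity of the integrand is at $\theta=0$ (where $t=e^{\ii\theta}\to1$), so I would fix $\delta\in(0,\pi/3)$ and split $\int_{0}^{\pi}=\int_{0}^{\delta}+\int_{\delta}^{\pi}$; the choice $\delta<\pi/3$ ensures $|1-e^{\ii\theta}|=2\sin(\theta/2)<1$ on $(0,\delta)$, where the transformed series below converges.

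On $(0,\delta)$ I would exploit the cancellation built into the comparison function. Writing $u=1-t$ and $F_{\xi}(u):={}_{2}F_{1}(1/2+\ii\xi,1/2+\ii\xi;1+2\ii\xi;u)$, formulas~\eqref{eq:f_transf1} and~\eqref{eq:def_g_xi} give $f_{\xi}-g_{\xi}=A_{\xi}u^{-1/2+\ii\xi}(F_{\xi}(u)-1)$ plus its $[\xi\mapsto-\xi]$ companion, handled identically. Since $\dd/\dd t=-\dd/\dd u$, differentiating produces terms $u^{-3/2\pm\ii\xi}(F_{\pm\xi}-1)$ and $u^{-1/2\pm\ii\xi}F_{\pm\xi}'$. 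The key uniform input is the coefficient bound $|(1/2+\ii\xi)_{k}^{2}/((1+2\ii\xi)_{k}k!)|\leq e^{c_{K}}(1/2)_{k}^{2}/(k!)^{2}$, which follows from $|(1+2\ii\xi)_{k}|\geq k!$ and from $\prod_{j\geq0}(1+\xi^{2}/(1/2+j)^{2})\leq e^{c_{K}}$ (convergent because $\sum_{j}(1/2+j)^{-2}<\infty$). This yields $|F_{\xi}(u)-1|\leq C_{K,\delta}|u|$ and $|F_{\xi}'(u)|\leq C_{K,\delta}$ for $|u|\leq2\sin(\delta/2)$, uniformly in $\xi$. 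Combined with $|u^{-3/2\pm\ii\xi}|=|u|^{-3/2}e^{\mp\xi\arg u}\leq|u|^{-3/2}e^{K\pi/2}$ (since $\arg u\in(-\pi/2,0)$) and $2\sin(\theta/2)\geq2\theta/\pi$, I obtain $|f_{\xi}'(e^{\ii\theta})-g_{\xi}'(e^{\ii\theta})|\leq C_{K}|A_{\xi}||u|^{-1/2}\leq C_{K}'|A_{\xi}|\theta^{-1/2}$; integrating $\theta^{-1/2}$ over $(0,\delta)$ is finite, so this part is $\leq C''|A_{\xi}|\leq C'''/\xi$.

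On $[\delta,\pi]$ no cancellation is needed and I would estimate the two derivatives separately. Differentiating~\eqref{eq:def_g_xi} gives $|g_{\xi}'(e^{\ii\theta})|\leq C_{K}|A_{\xi}||u|^{-3/2}\leq C_{K,\delta}|A_{\xi}|\leq C/\xi$, since $|u|\geq2\sin(\delta/2)$. For $f_{\xi}'$ the transformed series is unusable here ($|u|>1$, and at $\theta=\pi$ the point $u=2$ sits on the branch cut of $F_{\xi}$), so I would revert to the original representation~\eqref{eq:gener_func_q_n}, $f_{\xi}(t)=(1-t)^{-1/2+\ii\xi}{}_{2}F_{1}(1/2+\ii\xi,1/2+\ii\xi;1;t)$, and bound $f_{\xi}'$ via Euler's integral for ${}_{2}F_{1}$ and for $\tfrac{\dd}{\dd t}{}_{2}F_{1}=(1/2+\ii\xi)^{2}\,{}_{2}F_{1}(3/2+\ii\xi,3/2+\ii\xi;2;t)$. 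Because $|1-se^{\ii\theta}|\geq\sin\delta$ for all $s\in[0,1]$, $\theta\in[\delta,\pi]$, and the Gamma prefactors stay bounded for $\xi\in(0,K]$, these integrals are bounded uniformly in $\xi$; as $t$ stays away from the singular point $1$, the factors $(1-t)^{-3/2+\ii\xi}$ are bounded too, giving $|f_{\xi}'(e^{\ii\theta})|\leq C_{K,\delta}$. Both contributions are thus $\leq C/\xi$, and adding the two arcs finishes the proof.

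The main obstacle is keeping every constant uniform in $\xi$ as $\xi\to0+$ while the singular point $t=1$ is approached, and in particular the necessity of two representations on the two arcs: the transformation~\eqref{eq:f_transf1} exposes the cancellation $F_{\xi}-1=O(u)$ only where its series converges (near $\theta=0$), whereas on the remaining arc that series diverges and $F_{\xi}$ meets its branch cut at $u=2$, forcing the switch back to the argument-$t$ representation with $t$ bounded away from $1$. The verifications that the hypergeometric coefficients and the Euler integrals are bounded uniformly in $\xi$ are routine but are precisely where this care must be exercised.
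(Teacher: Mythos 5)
Your proof is correct and follows essentially the same route as the paper's: the same reduction to an $n$-independent $L^{1}$ bound, the same split of $[0,\pi]$ at a point below $\pi/3$, the same use of the transformed representation~\eqref{eq:f_transf1} near $\theta=0$ (with uniform-in-$\xi$ coefficient bounds and the integrable $\theta^{-1/2}$ singularity), and on the outer arc the same separate estimates of $f_{\xi}'$ via Euler's integral (uniformly bounded) and of $g_{\xi}'$ (which carries the $1/\xi$ coming from the Gamma ratio $|\Gamma(2\ii\xi)|/|\Gamma(1/2+\ii\xi)|^{2}$). The only cosmetic difference is that you package the near-singularity estimate as $F_{\xi}(u)-1=O(|u|)$ and $F_{\xi}'(u)=O(1)$ with a clean Pochhammer-ratio bound, whereas the paper separates the $k=1$ term of the series from the $k\geq2$ tail and majorizes the tail by an explicit convergent series.
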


Now, we are in position to verify the formula from Proposition~\ref{prop:q_n_asympt}.

\begin{proof}[Proof of Proposition~\ref{prop:q_n_asympt}]
Fix $K>0$ and $0<L<1/2$. It follows from~\eqref{eq:diff_q_n_c_n_integral} and Lemma~\ref{lem:remainder_estim_aux} that 
\[
 |q_{n}(\xi)-c_{n}(\xi)|\leq\frac{C_{K,L}'}{2\pi n},
\]
for all $\xi\in\mathcal{S}_{K,L}$ and $n\in\N$. Consequently, it suffices to prove the claim of Proposition~\ref{prop:q_n_asympt} with $q_{n}$ replaced by $c_{n}$.

For coefficients from the Maclaurin series of~\eqref{eq:def_g_xi}, one readily gets
\begin{equation}
 c_{n}(\xi)=\frac{\ii\Gamma(1-2\ii\xi)}{2\Gamma^{3}(1/2-\ii\xi)}\frac{\Gamma(n+1/2-\ii\xi)}{\Gamma(n+1)}-[\xi\mapsto-\xi],
\label{eq:c_n_coeff}
\end{equation}
for all $n\in\N_{0}$ and $\xi\in\mathcal{S}_{K,L}$. Next, we can use known error bounds in asymptotic expansions for the ratio of two Gamma functions~\cite{fre_siamjma92}. Namely, it follows from~\cite[Eqs.~1.4 and~1.5]{fre_siamjma92} that there is a constant $C_{K,L,1}>0$ such that, for all $n\in\N$ and $\xi\in\mathcal{S}_{K,L}$, it holds
\begin{equation}
\frac{\Gamma(n+1/2\pm\ii\xi)}{\Gamma(n+1)}=n^{-1/2\pm\ii\xi}+\rho_{n}(\pm\xi),
\label{eq:frenzen_bound}
\end{equation}
where $|\rho_{n}(\pm\xi)|\leq C_{K,L,1}/n$. Further, there exists a constant $C_{K,L,2}>0$ such that, for all $\xi\in\mathcal{S}_{K,L}$, one has
%one readily deduces from identities~\cite[Eqs.~5.4.3 and~5.4.4]{dlmf}
%\begin{equation}
%|\Gamma(\pm\ii\xi)|^{2}=\frac{\pi}{\xi\sinh(\pi\xi)} \quad\mbox{ and }\quad  |\Gamma(1/2\pm\ii\xi)|^{2}%=\frac{\pi}{\cosh(\pi\xi)}
%\label{eq:gamma_ident}
%\end{equation}
%that there is a constant $C_{K,2}>0$ such that

\begin{equation}
\left|\frac{\ii\Gamma(1\pm 2\ii\xi)}{2\Gamma^{3}(1/2\pm\ii\xi)}\right|\leq C_{K,L,2},
\label{eq:func_bound_aux}
\end{equation}
since the function on the left-hand side is continuous in the compact set~$\mathcal{S}_{K,L}$. By combining~\eqref{eq:c_n_coeff}, \eqref{eq:frenzen_bound}, and~\eqref{eq:func_bound_aux}, we obtain formula
\[
 c_{n}(\xi)=\frac{1}{\sqrt{n}}\left[\frac{\ii\Gamma(1-2\ii\xi)}{2\Gamma^{3}(1/2-\ii\xi)}\,n^{-\ii\xi}-\frac{\ii\Gamma(1+2\ii\xi)}{2\Gamma^{3}(1/2+\ii\xi)}\,n^{\ii\xi}\right]+R_{n}(\xi),
\]
for all $n\in\N$ and $\xi\in\mathcal{S}_{K,L}$, where $|R_{n}(\xi)|\leq C_{K,L}/n$ for $C_{K,L}:=2C_{K,L,1}C_{K,L,2}$. 
Finally, with the aid of the identity
\[
\quad n^{\pm\ii\xi}=\cos(\xi\log n)\pm\ii\sin(\xi \log n),
\]
we readily arrive at the statement of Proposition~\ref{prop:q_n_asympt} with $q_{n}$ replaced by $c_{n}$.
\end{proof}

%\begin{rem}
% The fact that the right hand side of the formula from Proposition~\ref{prop:q_n_asympt} explodes, as $\xi\to0+$, is a consequence of a quantitatively different asymptotic behavior of $q_{n}(0)$, for $n\to\infty$, that involves an extra logarithmic term. Without going into details, we remark that
% \[
% q_{n}(0)=\frac{1}{\pi^{3/2}}\frac{\log n}{\sqrt{n}}\left(1+o(1)\right), \quad n\to\infty,
% \]
% which can be verified by an application of the Darboux method.
%\end{rem}

Having Proposition~\ref{prop:q_n_asympt}, we can readily derive asymptotic expansions for zeros of $q_{n}$, for $n\to\infty$. Notice that $q_{n}$ is an odd polynomial of degree $2n+1$ with simple zeros distributed symmetrically with respect to the origin, see~\eqref{eq:def_q_n} and Theorem~\ref{thm:spec_L_n_nu=1}. The same holds true for the scaled polynomial 
\[
 Q_{n}(\xi):=\sqrt{n}\,q_{n}\!\left(\frac{\xi}{\log n}\right),
\]
whose positive roots we denote by
 \[
  0<\xi_{1,n}<\xi_{2,n}<\dots<\xi_{n,n}.
 \]
Then, by~\eqref{eq:def_q_n}, we have
\begin{equation}
 \mu_{n-j+1,n}=\frac{4}{1+\frac{4\xi_{j,n}^{2}}{\log^{2}n}},
\label{eq:mu_rel_xi}
\end{equation}
for all $j\in\{1,\dots,n\}$.

\begin{proof}[Proof of Theorem~\ref{thm:l-hilbert_large_evls}] It follows from Proposition~\ref{prop:q_n_asympt} that 
\[
 \lim_{n\to\infty}Q_{n}(\xi)=\Im\!\left(F(0)\right)\cos\xi + \Re\!\left(F(0)\right)\sin\xi=\frac{\Gamma(1)}{\Gamma^{3}(1/2)}\sin\xi=\frac{1}{\pi^{3/2}}\sin\xi
\]
locally uniformly in $\C$. Since the zeros of the sine function are simple and located at points $\pi j$, for $j\in\Z$, Hurwitz's theorem~\cite[Chp.~VII, Thm.~2.5]{con_78} implies that, for any $j\in\Z$ and $\epsilon\in(0,\pi)$ fixed, polynomial $Q_{n}$ has exactly one simple root in $(\pi j-\epsilon,\pi j+\epsilon)$ for all $n$ sufficiently large. It follows that, for $j\in\N$ fixed, we have
\[
 \xi_{j,n}=\pi j+o(1), \quad n\to\infty.
\]

Similarly, we can deduce the next term in the asymptotic expansion of $\xi_{j,n}$, for $n\to\infty$. We write 
 \begin{equation}
  \xi_{j,n}=\pi j + \eta_{j,n},
 \label{eq:xi_expand_first_order}
 \end{equation}
 where $\eta_{j,n}\to0$, as $n\to\infty$. Then, by applying Proposition~\ref{prop:q_n_asympt} in the equation
 \[
 \log(n)\,Q_{n}(\xi_{j,n})=0,
 \]
 we obtain the condition
 \begin{equation}
 \lim_{n\to\infty} \log(n)\Im F\left(\frac{\pi j + \eta_{j,n}}{\log n}\right)\cos(\eta_{j,n})+\log (n)\Re F\left(\frac{\pi j + \eta_{j,n}}{\log n}\right)\sin(\eta_{j,n})=0.
 \label{eq:fund_limit_eq_second}
 \end{equation}
 since the remainder term even when multiplied by $\log n$ (or any power of $\log n$) vanishes, as $n\to\infty$, because it decays at least as fast as $1/\sqrt{n}$ uniformly in $\xi$. In view of the limit formulas
 \[
 \lim_{n\to\infty} \log(n)\Im F\left(\frac{\pi j + \eta_{j,n}}{\log n}\right)=\pi j \Im F'(0), \quad 
 \lim_{n\to\infty} \Re F\left(\frac{\pi j + \eta_{j,n}}{\log n}\right)=\Re F(0),
 \]
 and $\lim_{n\to\infty}\cos(\eta_{j,n})=1$, one deduces from~\eqref{eq:fund_limit_eq_second} that 
 \[
 \lim_{n\to\infty}\log(n)\sin(\eta_{j,n})=\lim_{n\to\infty}\log(n)\eta_{j,n}=-\frac{\pi j\Im F'(0)}{\Re F(0)}.
 \]
 The first two coefficients from the Maclaurin expansion of function~\eqref{eq:def_func_F} are
 \begin{equation}
  F(0)=\frac{1}{\pi^{3/2}} \quad\mbox{ and }\quad F'(0)=\frac{\ii\left(\gamma+6\log 2\right)}{\pi^{3/2}},
 \label{eq:F_F'_id}
 \end{equation}
 as it follows from~\eqref{eq:def_func_F} and the known special values of the Gamma function and its derivative
 \[
  \Gamma(1)=1, \quad \Gamma(1/2)=\sqrt{\pi}, \quad \Gamma'(1)=-\gamma, \,\,\mbox{ and }\,\,
  \Gamma'(1/2)=-\sqrt{\pi}(\gamma+2\log 2),
 \]
 see~\cite[\S~5.4]{dlmf}. Recalling also~\eqref{eq:xi_expand_first_order}, we arrive at the asymptotic formula
 \begin{equation}
  \xi_{j,n}=\pi j-\frac{\pi j\left(\gamma+6\log 2\right)}{\log n}+o\left(\frac{1}{\log n}\right), \quad n\to\infty,
 \label{eq:xi_asympt_expand}
 \end{equation}
 which holds true for any fixed $j\in\N$. It is also clear that the remainder $o(1/\log n)$ can be replaced by $O(1/\log^{2} n)$, which one would verify by repeating the same computational procedure (and determine the next term in the expansion).

Finally, a direct computation using~\eqref{eq:mu_rel_xi} and~\eqref{eq:xi_asympt_expand} yields
 \[
 \mu_{n-j+1,n}= 4-\frac{16\pi^{2}j^{2}}{\log^{2}n}+\frac{32\pi^{2}j^{2}\left(\gamma+6\log 2\right)}{\log^{3}n}+O\left(\frac{1}{\log^{4}n}\right), \quad n\to\infty,
 \]
 for $j\in\N$, which is the claim of Theorem~\ref{thm:l-hilbert_large_evls}. 
\end{proof}

It is clear from the proof of Theorem~\ref{thm:l-hilbert_large_evls} that the zeros $\xi_{j,n}$ and hence also eigenvalues $\mu_{n-j+1,n}$ can be asymptotically expanded to an arbitrary negative power of $\log n$. The computational procedure is purely algebraic, however, higher order terms become quickly complicated. In addition, we have not observed any particular structure in the coefficients. For the sake of curiosity, we compute two more terms in the asymptotic formula for the best constant in the truncated Hardy inequality $\mu_{n,n}=\|L_{n}\|$, as $n\to\infty$. The formula reads
\begin{align}
 \|L_{n}\|=4-\frac{16\pi^{2}}{\log^{2}n}+\frac{32\pi^{2}\kappa}{\log^{3}n}&-\frac{16\pi^{2}(3\kappa^{2}-4\pi^{2})}{\log^{4}n}\nonumber\\
 &+\frac{32\pi^{2}\left[6\kappa(\kappa^{2}-4\pi^{2})-13\pi^{2}\zeta(3)\right]}{3\log^{5}n}+O\left(\frac{1}{\log^{6}n}\right),
\label{eq:norm_asympt}
\end{align}
for $n\to\infty$, where $\kappa:=\gamma+6\log 2$. Details of the computational procedure are briefly described in Appendix~\ref{subsec:a.2}.

\section*{Acknowledgement}
The author wishes to acknowledge gratefully partial support from grant No.~20-17749X of the Czech Science Foundation.

%%%%%%%%%%%%%%%%%%%%%%%%%%%%%%%%%%%%%%%%%%%%%%%%%%%%%%%%%%%%%%%%%%%%%%%%
\setcounter{section}{1}
\renewcommand{\thesection}{\Alph{section}}
\setcounter{equation}{0} \renewcommand{\theequation}{\Alph{section}.\arabic{equation}}

\section*{Appendix}

\subsection{Proof of Lemma~\ref{lem:remainder_estim_aux}}\label{subsec:a.1}

We fix $K>0$, $0<L<1/2$, and recall the notation 
\[
\mathcal{S}_{K,L}=\{\xi\in\C \mid |\Re\xi|\leq K \; \wedge \; |\Im\xi|\leq L\}. 
\]
First, we write the integral 
\[
  \int_{-\pi}^{\pi}e^{-\ii(n-1)\theta}\left(f_{\xi}'(e^{\ii\theta})-g_{\xi}'(e^{\ii\theta})\right)\dd\theta
\]
as a sum of two integrals for $|\theta|<\varepsilon$ and $\varepsilon<|\theta|<\pi$, and estimate the two integrals separately. Constant $\varepsilon$ can be chosen any number such that $0<\varepsilon<\pi/3$ which will be needed to guarantee that $|1-e^{\ii\varepsilon}|<1$. For the sake of concreteness, we set $\varepsilon:=2\arcsin(1/4)\approx0.505$, then $|1-e^{\ii\varepsilon}|=1/2$. We will also make use of the simple bounds
\begin{equation}
 |z|^{-\Im\xi} e^{-\pi|\Re \xi|}\,\leq\left|z^{\ii\xi}\right|\leq |z|^{-\Im\xi}\,e^{\pi|\Re \xi|},
 \label{eq:complex_power_aux}
\end{equation}
which holds for any $z\in\C\setminus\{0\}$ and $\xi\in\C$.

1) The integration for $\varepsilon<|\theta|<\pi$: By differentiation of~\eqref{eq:gener_func_q_n} with respect to~$t$ and using identity~\cite[Eq.~15.5.1]{dlmf}, one gets
\begin{align}
 f_{\xi}'(t)&=\xi\left(\frac{1}{2}-\ii\xi\right)(1-t)^{-3/2+\ii\xi}\,\pFq{2}{1}{1/2+\ii\xi, 1/2+\ii\xi}{1}{t}\nonumber\\
 			&+\xi\left(\frac{1}{2}+\ii\xi\right)^{2}(1-t)^{-1/2+\ii\xi}\,\pFq{2}{1}{3/2+\ii\xi, 3/2+\ii\xi}{2}{t}.
			\label{eq:f_der_in_proof}
\end{align}
Both ${}_{2}F_{1}$-functions are bounded. Indeed, with the aid of the integral representation~\cite[Eq.~15.6.1]{dlmf}
\[
 \pFq{2}{1}{a,b}{c}{t}=\frac{\Gamma(c)}{\Gamma(b)\Gamma(c-b)}\int_{0}^{1}\frac{u^{b-1}(1-u)^{c-b-1}}{(1-tu)^{a}}\dd u,
\]
which holds true for $\mathrm{Arg}\,(1-t)<\pi$ and $\Re c > \Re b >0$, one can estimate
\[
\left|\pFq{2}{1}{1/2+\ii\xi, 1/2+\ii\xi}{1}{t}\right|\leq\frac{1}{|\Gamma(1/2+\ii\xi)\Gamma(1/2-\ii\xi)|}\int_{0}^{1}\frac{u^{-1/2-\Im\xi}(1-u)^{-1/2+\Im\xi}}{\left|(1-tu)^{1/2+\ii\xi}\right|}\dd u,
\]
provided that $|\Im\xi|<1/2$.
Since
\[
 \min_{|\theta|\in[\varepsilon,\pi], u\in[0,1]}\left|1-e^{\ii\theta}u\right|=\sin\varepsilon=\frac{\sqrt{15}}{8}>\frac{1}{4},
\]
and in view of~\eqref{eq:complex_power_aux}, we have
\[
 \left|(1-tu)^{1/2+\ii\xi}\right|\geq |1-tu|^{1/2-\Im\xi}\, e^{-\pi|\Re \xi|}\geq 4^{-1/2-L} e^{-\pi K}\geq\frac{1}{4}e^{-\pi K},
\]
for all $u\in[0,1]$, $t=e^{\ii\theta}$ with $|\theta|\in[\varepsilon,\pi]$, and $\xi\in\mathcal{S}_{K,L}$. Hence
\[
\left|\pFq{2}{1}{1/2+\ii\xi, 1/2+\ii\xi}{1}{t}\right|\leq\frac{4e^{\pi K}}{|\Gamma(1/2+\ii\xi)\Gamma(1/2-\ii\xi)|}\int_{0}^{1}u^{1/2-\Im\xi}(1-u)^{1/2+\Im\xi}\dd u,
\]
for $\xi\in\mathcal{S}_{K,L}$. Using the integral identity
\[
 \int_{0}^{1}u^{-1/2-\alpha}(1-u)^{-1/2+\alpha}\dd u = \frac{\pi}{\cos(\pi\alpha)},
\]
which holds true for $|\Re\alpha|<1/2$, and using that the resulting function in the upper bound is continuous in $\mathcal{S}_{K,L}$, we verify that, for all $\xi\in\mathcal{S}_{K,L}$,
\[
\left|\pFq{2}{1}{1/2+\ii\xi, 1/2+\ii\xi}{1}{t}\right|\leq C_{1}',
\]
for a constant $C_{1}'>0$. Here and below we suppress the explicit dependence of constants $C_{i}'=C_{K,L,i}'$, $i=1,2,3,\dots$, on $K$ and $L$ in the notation for brevity.
Very analogously, one also checks that 
\[
\left|\pFq{2}{1}{3/2+\ii\xi, 3/2+\ii\xi}{2}{t}\right|\leq C_{2}',
\]
for all $\xi\in\mathcal{S}_{K,L}$ and a constant $C_{2}'>0$. Plugging these estimates into~\eqref{eq:f_der_in_proof} we readily derive the upper bound
\[
\left|f_{\xi}'(t)\right|\leq C_{1}'\,2^{3/2+\Im\xi}\,e^{\pi|\Re\xi|}\,|\xi|\left|\frac{1}{2}-\ii\xi\right|+
C_{2}'\,2^{1/2+\Im\xi}\,e^{\pi|\Re\xi|}\,|\xi|\left|\frac{1}{2}+\ii\xi\right|^{2},
\]
for $t=e^{\ii\theta}$, where $|\theta|\in[\varepsilon,\pi]$, and $\xi\in\mathcal{S}_{K,L}$. The right-hand side is obviously bounded on $\mathcal{S}_{K,L}$ by a constant, say $C_{3}'>0$. Thus, we get 
\begin{equation}
 \int_{\varepsilon<|\theta|<\pi}\left|f_{\xi}'(e^{\ii\theta})\right|\dd\theta\leq2\pi C_{3}'.
\label{eq:int_eps_pi_f_bound}
\end{equation}

Next, with the aid of~\eqref{eq:complex_power_aux}, we can estimate the expression for the derivative of~\eqref{eq:def_g_xi} as follows:
\[
 \left|g_{\xi}'(t)\right|\leq\left|\frac{1}{2}-\ii\xi\right|\,2^{3/2+\Im\xi}\,e^{\pi|\Re\xi|}\left|\frac{\ii\Gamma(1-2\ii\xi)}{2\Gamma^{2}(1/2-\ii\xi)}\right|+[\xi\mapsto-\xi],
\]
for all $t=e^{\ii\theta}$, where $|\theta|\in[\varepsilon,\pi]$, and $\xi\in\mathcal{S}_{K,L}$. As the right-hand side is again a continuous function on $\mathcal{S}_{K,L}$, we verify that there is a constant $C_{4}'>0$ such that 
\begin{equation}
 \int_{\varepsilon<|\theta|<\pi}\left|g_{\xi}'(e^{\ii\theta})\right|\dd\theta\leq2\pi C_{4}',
\label{eq:int_eps_pi_g_bound}
\end{equation}
for all $\xi\in\mathcal{S}_{K,L}$.

In total, estimates~\eqref{eq:int_eps_pi_f_bound} and~\eqref{eq:int_eps_pi_g_bound} imply
\begin{equation}
\left|\int_{\varepsilon<|\theta|<\pi}e^{-\ii(n-1)\theta}\left(f_{\xi}'(e^{\ii\theta})-g_{\xi}'(e^{\ii\theta})\right)\dd\theta\right|\leq \int_{\varepsilon<|\theta|<\pi}\left(\left|f_{\xi}'(e^{\ii\theta})\right|+\left|g_{\xi}'(e^{\ii\theta})\right|\right)\dd\theta\leq C_{5}',
\label{eq:estim1_inproof}
\end{equation}
for all $n\in\N$ and $\xi\in\mathcal{S}_{K,L}$, where $C_{5}':=2\pi(C_{3}'+C_{4}')$.

2) The integration for $|\theta|<\varepsilon$: In view of~\eqref{eq:f_transf1} and~\eqref{eq:def_g_xi}, we have
\[
 f_{\xi}(t)-g_{\xi}(t)=\frac{\ii\Gamma(1-2\ii\xi)}{2\Gamma^{2}(1/2-\ii\xi)}\sum_{k=1}^{\infty}\frac{(1/2+\ii\xi)_{k}^{2}}{k!\,(1+2\ii\xi)_{k}}(1-t)^{k-1/2+\ii\xi}-[\xi\mapsto-\xi].
\]
When differentiated with respect to $t$, we obtain
\begin{align*}
 f_{\xi}'(t)-g_{\xi}'(t)=&-\frac{\ii\Gamma(1-2\ii\xi)}{2\Gamma^{2}(1/2-\ii\xi)}\bigg(\frac{(1/2+\ii\xi)^{2}}{2(1-t)^{1/2-\ii\xi}}\\
 &\hskip68pt+\sum_{k=2}^{\infty}\frac{(1/2+\ii\xi)_{k}^{2}(k-1/2+\ii\xi)}{k!\,(1+2\ii\xi)_{k}}(1-t)^{k-3/2+\ii\xi}\bigg)
 -[\xi\mapsto-\xi].
\end{align*}
Since $|1-t|\leq1/2$, for all $t=e^{\ii\theta}$ with $|\theta|\in[0,\varepsilon]$, and using also~\eqref{eq:complex_power_aux} and the upper bound $|\Im\xi|<1/2$ for $\xi\in\mathcal{S}_{K,L}$, the sum can be majorized by a convergent series
\begin{align*}
\left|\sum_{k=2}^{\infty}\frac{(1/2+\ii\xi)_{k}^{2}(k-1/2+\ii\xi)}{k!\,(1+2\ii\xi)_{k}}(1-t)^{k-3/2+\ii\xi}\right|\leq e^{\pi K}\sum_{k=1}^{\infty}\frac{\sqrt{(k+1)^{2}+K^{2}}}{k!(k+1)!}2^{-k}\prod_{j=1}^{k+1}\left[j^{2}+K^{2}\right].
\end{align*}
We denote the right-hand side by $C_{6}'$ and deduce the upper bound
\begin{align*}
 &\left|\int_{-\varepsilon}^{\varepsilon}e^{-\ii(n-1)\theta}\left(f_{\xi}'(e^{\ii\theta})-g_{\xi}'(e^{\ii\theta})\right)\dd\theta\right|\leq\int_{-\varepsilon}^{\varepsilon}\left|f_{\xi}'(e^{\ii\theta})-g_{\xi}'(e^{\ii\theta})\right|\dd\theta\\
 &\hskip28pt\leq\frac{|\Gamma(1-2\ii\xi)|}{4|\Gamma(1/2-\ii\xi)|^{2}}\left(|1/2+\ii\xi|^{2}\int_{-\varepsilon}^{\varepsilon}\frac{\dd\theta}{\left|(1-e^{\ii\theta})^{1/2-\ii\xi}\right|}+2\pi C_{6}'\right)+[\xi\mapsto-\xi],
\end{align*}
for all $n\in\N$ and $\xi\in\mathcal{S}_{K,L}$. 
Using once more~\eqref{eq:complex_power_aux} and majorizing continuous functions on $\mathcal{S}_{K,L}$ by constants, we arrive at the upper bound
\begin{equation}
\left|\int_{-\varepsilon}^{\varepsilon}e^{-\ii(n-1)\theta}\left(f_{\xi}'(e^{\ii\theta})-g_{\xi}'(e^{\ii\theta})\right)\dd\theta\right|\leq C_{7}'\int_{0}^{\varepsilon}\frac{\dd\theta}{|1-e^{\ii\theta}|^{1/2+\Im\xi}}+C_{8}',
\label{eq:second_int_estim_inproof}
\end{equation}
for all $\xi\in\mathcal{S}_{K,L}$, $n\in\N$, and some $C_{7}',C_{8}'>0$.

It remains to verify that the integral from the right-hand side of~\eqref{eq:second_int_estim_inproof} is bounded. It follows from the elementary estimate
\begin{align*}
\int_{0}^{\varepsilon}\frac{\dd\theta}{|1-e^{\ii\theta}|^{1/2+\Im\xi}}&=\frac{1}{2^{1/2+\Im\xi}}\int_{0}^{\varepsilon}\frac{\dd\theta}{|\sin(\theta/2)|^{1/2+\Im\xi}}\leq \left(\frac{\pi}{2}\right)^{1/2+\Im\xi}\int_{0}^{\pi}\frac{\dd\theta}{\theta^{1/2+\Im\xi}}\\
&=\frac{\pi}{2^{\Im\xi-1/2}\,(1-2\Im\xi)}\leq\frac{2\pi}{1-2L}.
\end{align*}
Thus, we get
\begin{equation}
\left|\int_{-\varepsilon}^{\varepsilon}e^{-\ii(n-1)\theta}\left(f_{\xi}'(e^{\ii\theta})-g_{\xi}'(e^{\ii\theta})\right)\dd\theta\right|\leq C_{9}',
\label{eq:estim2_inproof}
\end{equation}
for all $\xi\in\mathcal{S}_{K,L}$ and $n\in\N$, where $C_{9}':=2C_{7}'\,\pi/(1-2L)+C_{8}'$. Finally, a combination of~\eqref{eq:estim1_inproof} and~\eqref{eq:estim2_inproof} implies the claim of Lemma~\ref{lem:remainder_estim_aux} for the constant $C_{K,L}':=C_{5}'+C_{9}'$.

\subsection{Higher order terms in the asymptotic formula for $\|L_{n}\|$}\label{subsec:a.2}

We continue the computational procedure started in the proof of Theorem~\ref{thm:l-hilbert_large_evls} for the particular case of the smallest root $\xi_{1,n}$. We make use of the asymptotic formula
\[
 \xi_{1,n}\sim\pi+\sum_{k=1}^{\infty}\frac{x_{k}}{\log^{k} n}, \quad n\to\infty,
\]
and the power series expansion
\[
 F(\xi)=\frac{\Gamma(1+2\ii\xi)}{\Gamma^{3}(1/2+\ii\xi)}=\sum_{l=0}^{\infty}\gamma_{l}\xi^{l}.
\]
It is follows from Proposition~\ref{prop:q_n_asympt} that 
\[
\lim_{n\to\infty}\log^{m}(n)\left[\Im F\!\left(\frac{\xi_{1,n}}{\log n}\right)\cos\xi_{1,n}+\Re F\!\left(\frac{\xi_{1,n}}{\log n}\right)\sin\xi_{1,n}\right]=0,
\]
for any power $m\in\N_{0}$ of the logarithm. Using these facts, we see, after simple manipulations, that the coefficients in the expression
\begin{align}
&-\sum_{l=0}^{\infty}\ii\gamma_{2l+1}\left(\frac{\pi}{\log n}+\sum_{k=1}^{\infty}\frac{x_{k}}{\log^{k+1} n}\right)^{2l+1}\times\sum_{m=0}^{\infty}\frac{(-1)^{m}}{(2m)!}\left(\sum_{k=1}^{\infty}\frac{x_{k}}{\log^{k} n}\right)^{2m}\nonumber\\
\hskip12pt &+\sum_{l=0}^{\infty}\gamma_{2l}\left(\frac{\pi}{\log n}+\sum_{k=1}^{\infty}\frac{x_{k}}{\log^{k+1} n}\right)^{2l}\times\sum_{m=0}^{\infty}\frac{(-1)^{m}}{(2m+1)!}\left(\sum_{k=1}^{\infty}\frac{x_{k}}{\log^{k} n}\right)^{2m+1}
\label{eq:init_eq_coeff}
\end{align}
standing in front of the same power of $\log n$ have to vanish. Then the unknown coefficients $x_{1},x_{2},x_{3},\dots$ can be successively computed from the resulting equations. 

By extracting the first three coefficients from~\eqref{eq:init_eq_coeff}, we get equations
\begin{align*}
x_{1}\gamma_{0}-\ii\pi\gamma_{1}&=0,\\
x_{2}\gamma_{0}-\ii x_{1}\gamma_{1}&=0,\\
(-x_{1}^{3}+6x_{3})\gamma_{0}+6\pi^{2}x_{1}\gamma_{2}+3\ii(\pi x_{1}^{2}\gamma_{1}-2x_{2}\gamma_{1}-2\pi^{3}\gamma_{3})&=0,
\end{align*}
which implies
\[
 x_{1}=\ii\frac{\pi\gamma_{1}}{\gamma_{0}}, \quad x_{2}=-\frac{\pi\gamma_{1}^{2}}{\gamma_{0}^{2}}, \quad
 x_{3}=\ii\frac{\pi(\pi^{2}-3)\gamma_{1}^{3}-3\pi^{3}\gamma_{0}\gamma_{1}\gamma_{2}+3\pi^{3}\gamma_{0}^{2}\gamma_{3}}{3\gamma_{0}^{3}}.
\]
Coefficients $\gamma_{0}=F(0)$ and $\gamma_{1}=F'(0)$ have been computed in~\eqref{eq:F_F'_id}. From the known values of the Gamma function and its derivatives at the points $1$ and $1/2$, see~\cite[\S~5.15]{dlmf}, we can also express other two coefficients as
\[
 \gamma_{2}=\frac{1}{2}F''(0)=\frac{5\pi^{2}-6\kappa^{2}}{12\pi^{3/2}}
 \quad\mbox{ and }\quad
 \gamma_{3}=\frac{1}{6}F'''(0)=\ii\frac{\kappa(5\pi^{2}-2\kappa^{2})-52\zeta(3)}{12\pi^{3/2}},
\]
where $\kappa:=\gamma+6\log 2$ and $\zeta(3)$ is Ap{\' e}ry's constant. Then we get
\[
 x_{1}=-\pi\kappa, \quad x_{2}=\pi\kappa^{2}, \quad x_{3}=-\pi\kappa^{3}+\frac{13}{3}\pi^{3}\zeta(3),
\]
which means
\[
 \xi_{1,n}=\pi-\frac{\pi\kappa}{\log n}+\frac{\pi\kappa^{2}}{\log^{2}n}-\frac{\pi(3\kappa^{3}-13\pi^{2}\zeta(3))}{3\log^{3}n}+O\left(\frac{1}{\log^{4}n}\right), \quad n\to\infty.
\]
Recalling finally that $\|L_{n}\|=\mu_{n,n}=4/(1+4\xi_{1,n}^{2}/\log^{2} n)$, see~\eqref{eq:mu_rel_xi}, one readily verifies asymptotic expansion~\eqref{eq:norm_asympt}.

\bibliographystyle{acm}

\end{document}